\DeclareSymbolFont{stmry}{U}{stmry}{m}{n}
\newtheorem{theoremcounter}{Theorem Counter}[section]
\theoremstyle{remark}
\newtheorem{remark}[theoremcounter]{Remark}
\theoremstyle{definition}
\newtheorem{definition}[theoremcounter]{Definition}
\theoremstyle{plain}
\newtheorem{lemma}[theoremcounter]{Lemma}
\newtheorem{proposition}[theoremcounter]{Proposition}
\newtheorem{theorem}[theoremcounter]{Theorem}
\numberwithin{equation}{section}
\newcommand{\Z}{\mathbb{Z}}
\newcommand{\C}{\mathbb{C}}
\newcommand{\bbH}{\mathbb{H}}
\newcommand{\calU}{\mathcal{U}}
\newcommand{\frakG}{\mathfrak{G}}
\DeclareMathOperator{\ImNew}{Im}
\renewcommand{\Im}{\ImNew}
\DeclareMathOperator{\ReNew}{Re}
\renewcommand{\Re}{\ReNew}
\DeclareMathOperator{\SL}{SL}
\newcommand{\st}[2]{{\genfrac{[}{]}{0pt}{}{#1}{#2}}}
\newcommand{\ol}[1]{\overline{#1}}
\newcommand{\abs}[1]{\lvert#1\rvert}
\newcommand{\modN}{\ (\mathrm{mod}\, N)}
\begin{document}
  \title[A Generalization of {M}ac{M}ahon Series via Cyclotomic Polynomials]{A Generalization of {M}ac{M}ahon Series via Cyclotomic Polynomials}
  \author[Riku Shintani]{Riku Shintani}
  \address{Graduate School of Science and Engineering, Ehime University, 2-5, Bunkyo-cho, Matsuyama, Ehime, 790-8577, Japan}
  \email{m807024z@mails.cc.ehime-u.ac.jp}

  \subjclass[2020]{11F11, 11C08}


  \begin{abstract}
    About a century ago, P. A. MacMahon introduced a class of $q$-series, which are nowadays referred to as MacMahon series. More recently, in 2013, G. E. Andrews and S. C. F. Rose revealed the quasimodular property of these series. In this paper, we introduce a generalization of MacMahon series. Specifically, for any positive integers $t, k, N$ and a polynomial $Q(x)$,  we define the series $\mathcal{U}_{t, k; N}(Q; q)$ and $\mathcal{U}_{t, k; N}^{\star}(Q; q)$ using the $N$-th cyclotomic polynomial. To investigate these series, we apply a decomposition formula involving the Eulerian polynomials and express the $N$-th roots of unity in terms of Gauss sums. By combining these results to derive explicit representations, we prove that our series arise as quasimodular forms of higher weight and higher level. Furthermore, we show that they can be expressed as isobaric polynomials. In particular, we show that the one-parameter generalization introduced by C. Nazaroglu, B. V. Pandey, and A. Singh arises as a special case of our theory.
  \end{abstract}
  \maketitle

  \section{Introduction}\label{Intro}
  In \cite{MacMahonDiv}, MacMahon introduced a class of $q$-series, for $t \in \Z_{\geq 1}$, defined by 
  \[
  \calU_{t}(q) \coloneq \sum_{1 \leq n_1 < n_2 < \cdots < n_t}
  \prod_{j = 1}^{t}\frac{q^{n_j}}{(1-q^{n_j})^2}.
  \]
  These sums have links to various fields, such as number theory and combinatorics (see e.g. \cites{Yuan2016, BachmannBialgeb}). In particular, they are known to deep connections to the theory of modular forms and quasimodular forms, since they can be expressed as a linear combination of quasimodular forms of weights up to $2t$ on $\SL_2(\Z)$ as shown by Andrews and Rose \cites{AndRose, Rose}.

  Various analogues of $\calU_t(q)$ have recently researched, especially with regard to modular properties (see e.g., \cite{allied_q-series}). Our investigation is motivated by a related class of series, introduced for $t, k, r \in \Z_{\geq 1}$ by Nazaroglu, Pandey, and Singh \cite{nazaroglu2025}
  \[
  \calU_{t, k, r}(a; q) \coloneq \sum_{1 \leq n_1 < n_2 < \cdots < n_t}
  \prod_{j = 1}^{t}\frac{q^{r n_j}}{(1 + a q^{n_j} + q^{2 n_j})^k}.
  \]
  In {\cite{nazaroglu2025}*{Theorem 1.1}}, they showed that for $a \in \{0, \pm 1, 2\}$ and any $t, k \in \Z_{\geq 1}$, each $q$-series $\calU_{t, k, k}(a; q)$ is a mixed-weight quasimodular form on suitable congruence subgroups.

  As a generalization, for $t, k, N \in \Z_{\geq 1}$ and some polynomial $Q(x)$, we define
  \begin{equation}\label{CMS}
    \calU_{t, k; N}(Q; q) \coloneq \sum_{1 \leq n_1 < n_2 < \cdots < n_t}\prod_{j = 1}^{t}\frac{Q(q^{n_j})}{\Phi_N(q^{n_j})^k},
  \end{equation}
  where $\Phi_N(x)$ denotes the $N$-th cyclotomic polynomial. We also define a variant of \eqref{CMS}
  \[
  \calU_{t, k; N}^{\star}(Q; q) \coloneq \sum_{1 \leq n_1 \leq n_2 \leq \cdots \leq n_t}\prod_{j = 1}^{t}\frac{Q(q^{n_j})}{\Phi_N(q^{n_j})^k}.
  \]
  In particular, we define 
  \[
  \calU_{k; N}(Q; q) \coloneq \calU_{1, k; N}(Q; q) = \calU_{1, k; N}^{\star}(Q; q) = \sum_{n \geq 1}\frac{Q(q^n)}{\Phi_N(q^n)^k}.
  \]

  We note that $\calU_{t, k; N}(Q; q)$ recovers $\calU_{t, k, r}(a; q)$ for suitable choices of parameters. For instance, setting $Q(x) = x^r$, $\calU_{t, k, r}(1; q) = \calU_{t, k; 3}(Q; q),\ \calU_{t, k, r}(0; q) = \calU_{t, k; 4}(Q; q)$, and $\calU_{t, k, r}(-1; q) = \calU_{t, k; 6}(Q; q)$.

  As our main theorem, we have the following.

  \begin{theorem}\label{main_thm}
    Let $t, k, N \in \Z_{\geq 1}$ and $Q(x)$ be a polynomial of degree less than $\phi(N)k$ satisfying $Q(0) = 0$ and 
    \[
    Q(x) = 
      \begin{cases}
        (-x)^k Q(1/x) & \text{if } N = 1, \\
        x^{\phi(N)k} Q(1/x) & \text{if } N \geq 2,
      \end{cases}
    \]
    where $\phi(N)$ denotes Euler's totient function. Then, we have the following:
    \begin{enumerate}[label = (\arabic*)]
      \item\label{quasimodularity_main} $\calU_{k; N}(Q; q)$ is a mixed-weight quasimodular form of weight at most $k$ and level at most $N$.
      \item\label{decomposition_main} $\calU_{t, k; N}(Q; q)$ and $\calU_{t, k; N}^{\star}(Q; q)$ are isobaric polynomial of weight $t$ in the variables $\calU_{sk; N}(Q^s; q)$ for $1 \leq s \leq t$. That is, each term is a monomial of the form 
      \[
      \calU_{k; N}(Q; q)^{s_1} \calU_{2k; N}(Q^2; q)^{s_2} \cdots \calU_{tk; N}(Q^t; q)^{s_t}
      \]
      satisfying $s_1 + 2s_2 + \cdots + ts_t = t$. In other words, both of these are mixed-weight quasimodular form of weight at most $tk$ and level at most $N$.
    \end{enumerate}
  \end{theorem}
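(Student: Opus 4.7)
My plan is to establish part (1) as the analytic core and then derive part (2) from it via classical symmetric-function identities. For part (1), I would begin with the factorization $\Phi_N(x) = \prod_{\zeta \in \mu_N^*}(x-\zeta)$ over the primitive $N$-th roots of unity. Combined with the degree bound $\deg Q < \phi(N)k$, this gives a partial fraction decomposition over $\C$
\[
\frac{Q(x)}{\Phi_N(x)^k} = \sum_{\zeta \in \mu_N^*}\sum_{j=1}^{k}\frac{c_{\zeta,j}}{(1 - \zeta^{-1}x)^{j}},
\]
with no polynomial part. The palindromic condition on $Q$ forces a Galois-equivariant symmetry pairing $c_{\zeta,j}$ with $c_{\zeta^{-1},j}$, while $Q(0)=0$ becomes the identity $\sum_{\zeta,j}c_{\zeta,j}=0$. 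Substituting $x = q^n$, expanding via the binomial series, and interchanging the $n$- and $m$-summations yields the Lambert-type building block
\[
\sum_{n\geq 1}\left(\frac{1}{(1-\zeta^{-1}q^n)^{j}} - 1\right) = \sum_{m\geq 1}\binom{m+j-1}{j-1}\zeta^{-m}\frac{q^{m}}{1-q^{m}},
\]
where the subtracted constants $1$ cancel in the outer sum over $(\zeta,j)$ precisely because $\sum_{\zeta,j}c_{\zeta,j}=0$. The coefficient $\binom{m+j-1}{j-1}$ is a polynomial in $m$ of degree $j-1\leq k-1$, which is essentially the Eulerian polynomial expansion of $1/(1-y)^j$. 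Finally, Fourier inversion on $(\Z/N\Z)^\times$ via Gauss sums converts the outer $\zeta$-sum into a sum over Dirichlet characters modulo $N$, producing character-twisted divisor series of the shape $\sum_{m\geq 1}\chi(m)\,m^{s}\,q^m/(1-q^m)$ with $0\leq s\leq k-1$. These are the standard generators of quasimodular forms of level at most $N$ and weight at most $k$, which gives part (1).

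For part (2), set $f_n := Q(q^n)/\Phi_N(q^n)^k$ for $n\geq 1$. Directly from the definitions, $\calU_{t,k;N}(Q;q) = e_t(f_1,f_2,\ldots)$, $\calU_{t,k;N}^{\star}(Q;q) = h_t(f_1,f_2,\ldots)$, and $\calU_{sk;N}(Q^s;q) = \sum_{n\geq 1}f_n^s = p_s(f_1,f_2,\ldots)$, where $e_t,h_t,p_s$ are the usual elementary, complete homogeneous, and power-sum symmetric functions. Newton's identities express both $e_t$ and $h_t$ as $\Q$-polynomials in $p_1,\ldots,p_t$ that are isobaric of weight $t$, i.e.\ combinations of monomials $p_1^{s_1}\cdots p_t^{s_t}$ with $s_1 + 2s_2 + \cdots + ts_t = t$; substituting back gives the desired representation. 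For the weight bound, I would verify that $Q^s$ still satisfies the hypotheses of part (1) with $k$ replaced by $sk$: its degree is less than $\phi(N)(sk)$, it vanishes at $0$, and the palindromic symmetry is preserved (the factor $(-x)^k$ for $N=1$ becoming $(-x)^{sk}$). Part (1) then gives each $\calU_{sk;N}(Q^s;q)$ weight at most $sk$, and since weights add under multiplication of quasimodular forms, every monomial of isobaric weight $t$ has quasimodular weight at most $\sum_s s_s \cdot sk = tk$.

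The main obstacle will be the final stage of part (1): after Gauss-sum inversion, one must verify that the character-twisted Lambert series genuinely live in the space of quasimodular forms on a congruence subgroup of level at most $N$ and weight at most $k$, rather than at the larger level dictated by the conductors of the auxiliary characters or at an inflated weight coming from the Eulerian expansion. The palindromic hypothesis on $Q$ should be deployed precisely here to pair contributions from $\zeta$ and $\zeta^{-1}$ so that the unwanted character-mismatched or parity-anomalous terms cancel; a careful case split between $N=1$ (where $\Phi_1(0)=-1$ explains the sign $(-x)^k$) and $N\geq 2$ (where $\Phi_N(0)=1$) will likely be needed but should proceed in parallel. Once part (1) is established uniformly in $Q$, part (2) follows as a formal consequence of Newton's identities together with the invariance of the hypotheses under $Q \mapsto Q^s$.
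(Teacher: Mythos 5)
Your outline follows essentially the same route as the paper: partial fractions of $Q(x)/\Phi_N(x)^k$ over the primitive $N$-th roots of unity, expansion of $1/(1-\zeta^{-1}x)^{j}$ into Lambert-type series whose coefficients are polynomials in $m$ of degree $j-1$ (your binomial coefficients $\binom{m+j-1}{j-1}$ are exactly what the paper's Stirling/Eulerian decomposition, Lemma \ref{PFD_lemma}, repackages as the powers $m^{\ell-1}$ feeding into $F_\ell$), Gauss-sum inversion to pass to character-twisted Eisenstein series, and the palindromic symmetry of $Q$ to eliminate the parity-anomalous terms; for part \ref{decomposition_main}, your identification of $\calU_{t,k;N}$, $\calU^{\star}_{t,k;N}$, $\calU_{sk;N}(Q^s;q)$ with $e_t$, $h_t$, $p_s$ in the $f_n$ and the appeal to Newton's identities is equivalent to the paper's exponential-formula computation in Lemma \ref{multisum_decomposition_lemma}, and your check that $Q^s$ inherits the hypotheses with $k\mapsto sk$ is the right way to get the weight bound $tk$. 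One step of part \ref{quasimodularity_main} needs repair as written: the inversion $\zeta_N^{m}=\frac{1}{\phi(N)}\sum_{\chi}G(\chi)\ol{\chi(m)}$ over characters modulo $N$ is false whenever $(m,N)>1$ (the right-hand side vanishes while the left does not), and your Lambert series runs over all $m\ge 1$. The paper's Lemma \ref{power_roots} fixes this by stratifying over $g=(N,m)$ and using characters modulo $N/g$, which is why the dilated series $F_\ell(\ol{\chi};g\tau)$ appear in Theorem \ref{main_N=N}; the level bound survives because $g\cdot(N/g)=N$ together with Lemma \ref{V_op}. With that refinement, and with the parity cancellation you correctly flag carried out as in the paper (the symmetry forces the odd-$\ell$ coefficients to vanish for $N\le 2$, and for $N\ge 3$ pairs $\zeta^{j}$ with $\zeta^{-j}$ so that only characters with $\chi(-1)=(-1)^{\ell}$ survive), your plan goes through.
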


  Throughout this paper, we denote the complex upper half-plane by $\bbH$ i.e., $\bbH \coloneq \{z \in \C : \Im(z) > 0\}$, and we set $q \coloneq e^{2\pi i \tau}$ for $\tau \in \bbH$.

  The organization of this paper is  as follows. In Section \ref{Prelim}, we introduce preliminary notions and related results on Eisenstein series, Eulerian polynomials, and Gauss sums. In Section \ref{Prf_of_main1}, we apply these results to prove Theorem \ref{main_thm} \ref{quasimodularity_main}. In Section \ref{Prf_of_main2}, we show Theorem \ref{main_thm} \ref{decomposition_main} using ideas from combinatorics. In Appendix \ref{appedix}, we give explicit formulas for the coefficients appearing in Section \ref{Prf_of_main1}. Using these formulas, we recover the results in {\cite{nazaroglu2025}*{Example 1.2}}.
  
  \section{Preliminaries}\label{Prelim}
  \subsection{Eisenstein Series and Quasimodular Forms}
  \begin{definition}
    Let $\chi$ be a Dirichlet character modulo $N \in \Z_{\geq 1}$ and $k \in \Z_{\geq 1}$ such that $\chi(-1) = (-1)^k$.
    Then for $\tau \in \bbH$ and $s \in \C$ with $k + 2 \Re(s) > 2$, we define the \emph{non-holomorphic Eisenstein series}
    \[
    \frakG_{k}(\chi; \tau; s) \coloneq \frac{1}{2}\sum_{\substack{(m, n) \in \Z^2 \setminus \{(0, 0)\} \\ m \equiv 0 \modN}}\frac{\ol{\chi(n)}}{(m\tau + n)^k}\frac{\Im(\tau)^s}{\abs{m\tau + n}^{2s}}.
    \]
  \end{definition}
  
  The non-holomorphic Eisenstein series $\frakG_k(\chi; \tau; s)$ has a meromorphic continuation to the whole complex plane. If $\chi$ is a non-trivial character, it is an entire function. However, for the trivial character $\chi = \mathbf{1}$, it has a simple pole at $s = 1 - k/2$. This pole does not lie at $s = 0$ for $k \geq 4$, and for $k = 2$ we use a procedure known as Hecke's trick (see e.g. {\cite{123Modular}*{Proposition 6}}) to define the value at $s = 0$. We define the holomorphic Eisenstein series as the specialization of $\frakG_k(\chi; \tau; s)$ at $s = 0$, normalized by a constant factor so that its Fourier expansion is given by the following lemma. We denote this function by $G_k(\chi; \tau)$ if $\chi \neq \mathbf{1}$ and simply by $G_k(\tau)$ if $\chi = \mathbf{1}$.
  
  \begin{lemma}{\cite{nazaroglu2025}*{Lemma 2.1}}\label{Eisen_explict}
    The Fourier expansions are given as follows. 
    \begin{enumerate}[label = (\arabic*)]
      \item Let $N \in \Z_{\geq 3}$, $k \in \Z_{\geq 1}$, and $\chi$ be a primitive Dirichlet character modulo $N$ satisfying $\chi(-1) = (-1)^k$. Then, we have
      \[
        G_k(\chi; \tau) = -\frac{B_{k, \chi}}{2k} + F_k(\chi; \tau),
      \]
      where $B_{k, \chi}$ denotes the $k$-th generalized Bernoulli number defined by 
      \[
        \sum_{a = 1}^{N}\frac{\chi(a)te^{at}}{e^{Nt} - 1} = \sum_{k \geq 0}B_{k, \chi}\frac{t^k}{k!},
      \]
      and 
      \[
        F_k(\chi; \tau) \coloneq \sum_{m, n \geq 1}\chi(m)m^{k-1}q^{mn}.
      \]
      \item Let $\chi = \mathbf{1}$ be the trivial character modulo $1$. For an even $k \geq 2$, we have
      \[
        G_k(\tau) = -\frac{B_k}{2k} + F_k(\tau),
      \]
      where $B_k \coloneq B_{k, \mathbf{1}}$ denotes the $k$-th Bernoulli number and 
      \[
        F_k(\tau) \coloneq \sum_{m, n \geq 1}m^{k-1}q^{mn}.
      \]
    \end{enumerate}
  \end{lemma}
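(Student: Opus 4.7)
The plan is to obtain the expansion by splitting the lattice sum and applying Poisson summation. Writing $\tau = u + iv$ with $v = \Im(\tau)$, I would separate the contribution from $m = 0$, which, using $\ol{\chi}(-n) = (-1)^k\ol{\chi}(n)$, equals $L(k + 2s, \ol{\chi})\,v^s$ and specializes at $s = 0$ to $L(k, \ol{\chi})$. The classical identity $L(1-k, \chi) = -B_{k, \chi}/k$ together with the functional equation of Dirichlet $L$-functions converts this constant, after multiplication by the normalization factor that defines $G_k(\chi; \tau)$, into $-B_{k, \chi}/(2k)$.

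For the terms with $m \neq 0$, the congruence $m \equiv 0 \modN$ lets me write $m = Nm'$ with $m' \neq 0$, and for fixed $m'$ I would apply Poisson summation in $n$ against the periodic coefficient $\ol{\chi(n)}$. Since $\chi$ is primitive of conductor $N$, its Fourier coefficients are Gauss sums, and the standard evaluation of $\int_{\R}(m\tau + x)^{-k}\abs{m\tau + x}^{-2s}e^{-2\pi i\xi x}\,dx$ at $s = 0$ picks out only the frequencies $\xi > 0$ (the $\xi < 0$ contributions vanish by contour shift into the upper half-plane). Collecting the resulting $q^{mn}$ factors and unfolding the Gauss sum produces exactly $\sum_{m, n \geq 1}\chi(m)m^{k-1}q^{mn} = F_k(\chi; \tau)$ up to the chosen normalization.

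Part (2) is the $\chi = \mathbf{1}$, $N = 1$ specialization of the same computation: the $m = 0$ piece becomes $\zeta(k + 2s)v^s \to \zeta(k)$ and is absorbed into $-B_k/(2k)$ via $\zeta(1-k) = -B_k/k$. For $k \geq 4$ the series converges absolutely, so specialization at $s = 0$ is immediate. For $k = 2$ I would invoke Hecke's trick as indicated in the paragraph preceding the lemma: analytically continue in $s$, then remove the non-holomorphic remainder that arises so as to land on the holomorphic $G_2(\tau)$ implicit in the statement.

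The main obstacle is the Poisson step: one has to evaluate the Fourier integral in closed form and then pass to $s = 0$ so that both the holomorphic $q^{mn}$ tower and the correct constant term emerge simultaneously. The Gauss-sum bookkeeping is what turns the raw Poisson output into the multiplicative Fourier coefficients $\chi(m)m^{k-1}$ of the divisor-type series $F_k(\chi; \tau)$, and for $k = 2$ the cancellation of the non-holomorphic Hecke correction against the pole of the $m = 0$ term requires care.
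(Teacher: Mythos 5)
This lemma is not proved in the paper at all: it is quoted verbatim from \cite{nazaroglu2025}*{Lemma 2.1}, and the normalization of $G_k(\chi;\tau)$ is \emph{defined} in the preceding paragraph precisely so that this expansion holds, so there is no in-paper argument to compare against. Your sketch is the standard derivation (split off $m=0$, expand $\ol{\chi}$ over additive characters via the Gauss sum for primitive $\chi$, apply Poisson summation/the Lipschitz formula in $n$, and match the constant term through the functional equation and $L(1-k,\chi)=-B_{k,\chi}/k$), and it is sound in outline. Two small points deserve attention if you were to write it out: for $m\neq 0$ you must pair $(m,n)$ with $(-m,-n)$ using $\chi(-1)=(-1)^k$ and the overall factor $\tfrac12$ before the contour-shift argument, since for $m<0$ it is the frequencies $\xi<0$ that survive; and in part (1) the cases $k=1,2$ also lie outside the region of absolute convergence, but the non-holomorphic $\xi=0$ contribution vanishes there because $\sum_{n \bmod N}\ol{\chi}(n)=0$ for non-trivial $\chi$, so the continuation to $s=0$ is already holomorphic and Hecke's trick is genuinely needed only for the trivial character in part (2), exactly as you indicate.
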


  For even $k \geq 4$, we know that $G_{k}(\tau) \in M_{k}(\SL_{2}(\Z))$, where $M_{k}(\SL_{2}(\Z))$ is the space of holomorphic modular forms of wight $k$ on $\SL_{2}(\Z)$. In contrast, $G_{2}(\tau)$ is a quasimodular form on $\SL_2(\Z)$. More specifically, a holomorphic function $f: \bbH \to \C$ is called a quasimodular form of weight $k$ and character $\chi$ on $\Gamma_0(N)$ if it satisfies the following properties:
  \begin{itemize}
    \item There exists $s \in \Z_{\geq 0}$ and holomorphic functions $f_i: \bbH \to \C\ (0 \leq i \leq s)$ such that for any $\begin{bsmallmatrix} a & b \\ c & d \end{bsmallmatrix} \in \Gamma_0(N)$, we have
    \[
    f\left(\frac{a\tau + b}{c\tau + d}\right) = \chi(d)(c\tau + d)^k\sum_{i = 0}^{s}f_i(\tau)\left(\frac{c}{c\tau + d}\right)^i.
    \]
    \item $f$ is holomorphic at all cusps of $\Gamma_0(N)$.
  \end{itemize}
  We denote the space of such functions by $\widetilde{M}_k(\Gamma_0(N), \chi)$.

  \begin{lemma}{\cite{WebMod}*{Proposition 2.22}}\label{V_op}
    Let $f(\tau) \in M_k(\Gamma_0(N), \chi)$. Then, for any $K \in \Z_{\geq 1}$, we have $f(K\tau) \in M_k(\Gamma_0(KN), \chi)$. This property also holds for $f(\tau) \in \widetilde{M}_{k}(\Gamma_0(N), \chi)$.
  \end{lemma}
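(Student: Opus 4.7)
The plan is a standard conjugation argument. Set $g(\tau) \coloneq f(K\tau)$, and let $\gamma = \smat{a & b \\ c & d} \in \Gamma_0(KN)$ be arbitrary; write $c = KN c_0$ for some $c_0 \in \Z$. A direct manipulation gives
\[
K \cdot \frac{a\tau + b}{c\tau + d} = \frac{a(K\tau) + Kb}{N c_0 (K\tau) + d},
\]
so that $K \gamma \tau = \gamma' \cdot (K\tau)$ with $\gamma' \coloneq \smat{a & Kb \\ N c_0 & d}$. One then checks $\det \gamma' = ad - bc = 1$ and that the lower-left entry is divisible by $N$, so $\gamma' \in \Gamma_0(N)$. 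Since $\gamma$ and $\gamma'$ share the same bottom-right entry $d$, the character $\chi$ transfers unchanged.

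For the modular case, I would simply substitute into the transformation law of $f$ at $\gamma'$ acting on $K\tau$:
\[
g(\gamma\tau) = f(\gamma' \cdot K\tau) = \chi(d)(N c_0 \cdot K\tau + d)^k f(K\tau) = \chi(d)(c\tau + d)^k g(\tau),
\]
which is the desired transformation law on $\Gamma_0(KN)$.

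For the quasimodular case, I would apply the full transformation rule with correction functions $f_0, \dots, f_s$. The identity $\tfrac{N c_0}{N c_0 \cdot K\tau + d} = \tfrac{1}{K} \cdot \tfrac{c}{c\tau + d}$ shows that the $i$-th correction gets multiplied by $K^{-i}$, so the quasimodular correction functions for $g$ are $g_i(\tau) \coloneq K^{-i} f_i(K\tau)$, which are again holomorphic on $\bbH$ and yield the required expansion of $g(\gamma\tau)$ in powers of $c/(c\tau + d)$.

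The step I expect to be most delicate is the holomorphy of $g$ at the cusps of $\Gamma_0(KN)$. Holomorphy at $\infty$ is immediate from that of $f$, but at other cusps one must examine $g \mid_k \alpha$ for representatives $\alpha \in \SL_2(\Z)$. I would handle this by a Hermite-normal-form decomposition $\smat{K & 0 \\ 0 & 1} \alpha = \beta \cdot \smat{K_1 & \ast \\ 0 & K_2}$ with $\beta \in \SL_2(\Z)$ and $K_1 K_2 = K$, which reduces the question to the holomorphy of $f \mid_k \beta$ at $\infty$, known by hypothesis. This would complete the plan for both assertions of the lemma.
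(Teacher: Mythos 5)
Your argument is correct and is the standard proof of this level-raising property; the paper itself gives no proof, simply citing \cite{WebMod}*{Proposition 2.22}. The conjugation $\smat{K & 0 \\ 0 & 1}\gamma = \gamma'\smat{K & 0 \\ 0 & 1}$ with $\gamma' = \smat{a & Kb \\ Nc_0 & d} \in \Gamma_0(N)$, the resulting correction functions $g_i(\tau) = K^{-i}f_i(K\tau)$ for the quasimodular case, and the Hermite-normal-form reduction for holomorphy at the cusps are all exactly as one would find in a textbook treatment, so there is nothing to reconcile with the paper's (absent) argument.
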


  \subsection{Eulerian Polynomials}

  \begin{definition}
    For $k \in \Z_{\geq 0}$, $k$-th \emph{Eulerian polynomial} $P_{k}(x)$ is defined by 
    \begin{equation}\label{Eulerian}
      \frac{P_{k}(x)}{(1 - x)^{k+1}} \coloneq \sum_{n \geq 1}n^k x^{n-1}
    \end{equation}
  \end{definition}

  The first few are given by
  \[
  P_{0}(x) = 1,\ P_{1}(x) = 1,\ P_{2}(x) = 1 + x,\ P_{3}(x) = 1 + 4x + x^2.
  \]
  For $k \in \Z_{\geq 1}$, the polynomial $P_{k}(x)$ has degree $k - 1$ and is reciprocal, that is, it satisfies the relation
  \begin{equation}\label{reciprocity}
    P_{k}(x) = x^{k-1}P_{k}(1/x).
  \end{equation}
  The following lemma provides a decomposition involving the Eulerian polynomials.

  \begin{lemma}\label{PFD_lemma}
    For $r \in \Z_{\geq 1}$, we have 
    \begin{equation}\label{pfd}
    \frac{1}{(1 - x)^{r}} = \frac{1}{(r-1)!}\sum_{\ell = 1}^{r}\st{r-1}{\ell - 1}\frac{P_{\ell - 1}(x)}{(1 - x)^{\ell}},
    \end{equation}
    where $\st{n}{k}$ denotes the unsigned Stirling number of the first kind, defined using the rising factorial $(x)_n \coloneq x(x+1)\cdots(x+n-1)$ by
    \begin{equation}\label{def_of_st}
      (x)_n = \sum_{k = 0}^{n}\st{n}{k}x^k.
    \end{equation}
  \end{lemma}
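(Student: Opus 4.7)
The plan is to verify the identity \eqref{pfd} by comparing the coefficient of $x^{n-1}$ on both sides, using only the binomial series, the definition \eqref{def_of_st} of Stirling numbers via the rising factorial, and the generating function \eqref{Eulerian} of the Eulerian polynomials.

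First, I would expand the left-hand side using the generalized binomial theorem as
\[
\frac{1}{(1-x)^r} = \sum_{n \geq 1}\binom{n + r - 2}{r - 1} x^{n-1}
= \sum_{n \geq 1}\frac{(n)_{r-1}}{(r-1)!}\, x^{n-1},
\]
where $(n)_{r-1} = n(n+1)\cdots(n+r-2)$ is the rising factorial.

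Next, I would apply \eqref{def_of_st} with $n$ in place of $x$ to obtain $(n)_{r-1} = \sum_{k=0}^{r-1}\st{r-1}{k} n^k$, and reindex $\ell = k+1$ to get $(n)_{r-1} = \sum_{\ell=1}^{r}\st{r-1}{\ell-1} n^{\ell-1}$. Substituting this in and exchanging the order of the two summations gives
\[
\frac{1}{(1-x)^r} = \frac{1}{(r-1)!}\sum_{\ell = 1}^{r}\st{r-1}{\ell-1} \sum_{n \geq 1} n^{\ell-1} x^{n-1}.
\]
Finally, I would invoke the defining relation \eqref{Eulerian} of the Eulerian polynomials to rewrite the inner sum as $P_{\ell-1}(x)/(1-x)^{\ell}$, producing exactly the right-hand side of \eqref{pfd}.

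There is no serious obstacle here; the identity is a direct generating function computation. The only minor care needed is the index shift between the Stirling sum (running from $k=0$ to $r-1$) and the sum in \eqref{pfd} (running from $\ell=1$ to $r$), and keeping track of the shift between $\sum_{n\geq 1} x^{n-1}$ and $\sum_{n\geq 0}x^n$ in the binomial expansion.
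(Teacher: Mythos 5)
Your proof is correct and is essentially the paper's argument run in the reverse direction: the paper starts from the right-hand side, uses \eqref{Eulerian} and the identity $\sum_{\ell=1}^{r}\st{r-1}{\ell-1}(m+1)^{\ell-1} = (m+1)_{r-1}$ from \eqref{def_of_st}, and recovers the binomial series for $(1-x)^{-r}$, while you start from the binomial series and unpack it the same way. The ingredients and index bookkeeping are identical, so there is nothing to add.
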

  \begin{proof}
    From \eqref{Eulerian}, the right-hand side of \eqref{pfd} can be rewritten as 
    \begin{equation}\label{rewrite_RHS_of_PFD}
      \frac{1}{(r-1)!}\sum_{\ell = 1}^{r}\st{r-1}{\ell-1}\left(\sum_{m \geq 1}m^{\ell-1}x^{m-1}\right) = \frac{1}{(r-1)!}\sum_{m \geq 0}\left(\sum_{\ell = 1}^{r}\st{r-1}{\ell-1}(m+1)^{\ell-1}\right)x^m.
    \end{equation}
    Here, using \eqref{def_of_st}, we see that
    \begin{align*}
      \sum_{\ell = 1}^{r}\st{r-1}{\ell-1}(m+1)^{\ell-1} 
      &= \sum_{\ell = 0}^{r-1}\st{r-1}{\ell}(m+1)^{\ell} \\
      &= (m+1)_{r-1} \\
      &= \frac{(m+r-1)!}{m!}.
    \end{align*}
    Substituting this, the right-hand side of \eqref{rewrite_RHS_of_PFD} equals
    \[
      \frac{1}{(r-1)!}\sum_{m \geq 0}\frac{(m+r-1)!}{m!}x^m
      = \sum_{m \geq 0}\binom{r+m-1}{m}x^m
      = \frac{1}{(1 - x)^r}.
      \qedhere
    \]
  \end{proof}

  \subsection{Gauss Sums}

  \begin{definition}\label{Gauss_sum}
    For a Dirichlet character modulo $N \in \Z_{\geq 1}$, we define the \emph{Gauss sum} associated with $\chi$ by
    \[
    G(\chi) \coloneq \sum_{a = 1}^{N}\chi(a)\zeta_{N}^a,
    \]
    where $\zeta_N \coloneq e^{2\pi i/N}$.
  \end{definition}
  To prove Theorem \ref{main_thm} \ref{quasimodularity_main}, we use the following expression for powers of roots of unity.
  
  \begin{lemma}\label{power_roots}
    Let $N, m \in \Z_{\geq 1}$ and $g \coloneq (N, m)$. Then, we have
    \begin{equation}\label{power_of_roots_of_unity}
      \zeta_{N}^m = \frac{1}{\phi(N/g)}\sum_{\chi\mathrm{: mod \, } N/g}G(\chi)\ol{\chi(m/g)},
    \end{equation}
    where the sum runs over all Dirichlet characters $\chi$ modulo $N/g$.
  \end{lemma}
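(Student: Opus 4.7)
The plan is to reduce the claim to the coprime case and then apply the orthogonality relations for Dirichlet characters. Setting $N' \coloneq N/g$ and $m' \coloneq m/g$, we have $(m', N') = 1$ by construction, and the basic identity $\zeta_N^m = e^{2\pi i m/N} = e^{2\pi i m'/N'} = \zeta_{N'}^{m'}$ rewrites the left-hand side of \eqref{power_of_roots_of_unity} in a form in which every Dirichlet character appearing on the right-hand side is evaluated at an argument coprime to its modulus. So it suffices to prove
\[
\zeta_{N'}^{m'} = \frac{1}{\phi(N')}\sum_{\chi\,\mathrm{mod}\,N'} G(\chi)\,\ol{\chi(m')}
\]
whenever $(m', N') = 1$.

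Next, I would substitute the definition of $G(\chi)$ from Definition~\ref{Gauss_sum} into the right-hand side and swap the order of the two finite sums, yielding
\[
\frac{1}{\phi(N')}\sum_{\chi\,\mathrm{mod}\,N'}\sum_{a = 1}^{N'}\chi(a)\,\ol{\chi(m')}\,\zeta_{N'}^a
=\frac{1}{\phi(N')}\sum_{a = 1}^{N'}\zeta_{N'}^a\sum_{\chi\,\mathrm{mod}\,N'}\chi(a)\ol{\chi(m')}.
\]
The inner sum is the standard character orthogonality sum: Dirichlet characters modulo $N'$ vanish on integers not coprime to $N'$, so the sum is $0$ unless $(a, N') = 1$, and for such $a$ it equals $\phi(N')$ if $a \equiv m' \pmod{N'}$ and $0$ otherwise. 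Since $(m', N') = 1$, exactly one $a \in \{1, \ldots, N'\}$ satisfies $a \equiv m' \pmod{N'}$, namely $a = m'$ (after reduction modulo $N'$).

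The contribution of this unique $a$ is $\zeta_{N'}^{m'}\cdot\phi(N')$, which after dividing by $\phi(N')$ yields exactly $\zeta_{N'}^{m'}$, proving the reduced statement and hence the lemma. I do not anticipate a serious obstacle here: the only subtlety is being careful with the reduction step so that the characters on the right-hand side, which live modulo $N/g$ rather than modulo $N$, are being applied to an argument $m/g$ that is genuinely coprime to their modulus; once that is in place, character orthogonality closes the argument immediately.
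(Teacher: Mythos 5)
Your proposal is correct and follows essentially the same route as the paper: reduce to $N' = N/g$, $m' = m/g$ with $(m', N') = 1$, expand the Gauss sum, interchange the two finite sums, and apply character orthogonality to isolate the single term $a \equiv m' \pmod{N'}$. The paper phrases the orthogonality step as $\sum_{\chi}\chi(am'^{-1}) = \phi(N')$ exactly when $a \equiv m'$, which is the same computation you carry out.
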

  \begin{proof}
    Let $N' = N/g$ and $m' = m/g$. Then $(N', m') = 1$ and $\zeta_N^m = \zeta_{N'}^{m'}$. Using these notations and the definition of the Gauss sum,  the right-hand side of \eqref{power_of_roots_of_unity}  can be rewritten as
    \begin{align*}
      \frac{1}{\phi(N')}\sum_{\chi\mathrm{: mod \, }N'}G(\chi)\ol{\chi(m')}
      &= \frac{1}{\phi(N')}\sum_{\chi\mathrm{: mod \, }N'}\left(\sum_{a = 1}^{N'}\chi(a)\zeta_{N'}^{a}\right) \ol{\chi(m')} \\
      &= \frac{1}{\phi(N')}\sum_{a = 1}^{N'}\zeta_{N'}^{a}\sum_{\chi\mathrm{: mod \, }N'}\chi(a)\ol{\chi(m')} \\
      &= \frac{1}{\phi(N')}\sum_{\substack{a = 1 \\ (a, N') = 1}}^{N'}\zeta_{N'}^{a}\sum_{\chi\mathrm{: mod \, }N'}\chi(am'^{-1}) \\
      &= \zeta_{N'}^{m'}.
    \end{align*}
    In the last equality, we have used the orthogonality relation for Dirichlet characters
    \[
    \sum_{\chi\mathrm{: mod \, }N} \chi(a) = 
    \begin{cases}
      \phi(N) & \text{if } a \equiv 1 \modN , \\
      0 & \text{if } a \not\equiv 1 \modN.
    \end{cases}
    \qedhere
    \]
  \end{proof}

  \section{Proof of Theorem \ref{main_thm} \ref{quasimodularity_main}}\label{Prf_of_main1}
  In this section, we prove Theorem \ref{main_thm}\ref{quasimodularity_main} by providing an explicit formula for $\calU_{k; N}(Q; q)$.

  \subsection{The Case \texorpdfstring{$N = 1$}{N = 1}}
  
  \begin{theorem}\label{main_N=1}
  Under the notation and assumption of Theorem \ref{main_thm}, we have
  \begin{equation}\label{N=1}
    \calU_{k; 1}(Q; q) = \sum_{\substack{\ell = 1 \\ \ell\mathrm{:even}}}^{k}c_{1; Q}(\ell)F_{\ell}(\tau),
  \end{equation}
  where the coefficients $c_{1; Q}(\ell)$ are defined by 
  \[
  c_{1; Q}(\ell) \coloneq \sum_{r = \ell}^{k}\frac{a_{1; Q}(r)}{(r-1)!}\st{r-1}{\ell-1}
  \]
  with the coefficients $a_{1; Q}(r)$ uniquely determined by the partial fraction decomposition of $\frac{Q(x)/x}{\Phi_1(x)^k}$
  \begin{equation}\label{PFD_N=1}
    \frac{Q(x)/x}{\Phi_1(x)^k} = \sum_{r = 1}^{k}a_{1; Q}(r)\frac{1}{(1-x)^r}.
  \end{equation}
  In particular, $\calU_{k; 1}(Q; q)$ is a linear combination of quasimodular forms of level $1$ and weight at most $k$.
  \end{theorem}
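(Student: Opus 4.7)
The plan is to unpack $\calU_{k; 1}(Q; q)$ mechanically via the partial fraction decomposition \eqref{PFD_N=1} and the Eulerian identity of Lemma \ref{PFD_lemma}, and then use the palindromicity condition $Q(x) = (-x)^k Q(1/x)$ to eliminate the odd-$\ell$ contributions. The only nontrivial step is the parity argument; the rest is routine bookkeeping.

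Concretely, since $Q(0) = 0$ and $\deg Q < k$, the rational function $(Q(x)/x)/\Phi_1(x)^k$ is proper with a unique pole at $x = 1$, so \eqref{PFD_N=1} is well defined and uniquely determines the constants $a_{1; Q}(r)$. Substituting $x = q^n$, multiplying by $q^n$, applying Lemma \ref{PFD_lemma}, and using the generating-function identity \eqref{Eulerian} to recognize $\sum_{n \geq 1} q^n P_{\ell - 1}(q^n)/(1 - q^n)^\ell = F_\ell(\tau)$, and then swapping the orders of summation over $n$, $r$, $\ell$, directly produces
$\calU_{k; 1}(Q; q) = \sum_{\ell = 1}^{k} c_{1; Q}(\ell) F_\ell(\tau)$, with the coefficients $c_{1;Q}(\ell)$ exactly as stated.

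The hard part is showing $c_{1; Q}(\ell) = 0$ for odd $\ell$. Setting $R(x) := (Q(x)/x)/\Phi_1(x)^k$, the palindromicity of $Q$ together with $\Phi_1(1/x)^k = (-1)^k \Phi_1(x)^k/x^k$ yields the rational-function symmetry $R(1/x) = x^2 R(x)$. Combining the PFD with Lemma \ref{PFD_lemma} rewrites $R(x) = \sum_\ell c_{1;Q}(\ell)\, b_\ell(x)$, where $b_\ell(x) := P_{\ell - 1}(x)/(1-x)^\ell$; the reciprocity \eqref{reciprocity} then gives $b_\ell(1/x) = (-1)^\ell x^2 b_\ell(x)$ for $\ell \geq 2$, while a direct calculation yields $b_1(1/x) = -x\, b_1(x)$. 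Substituting these into $R(1/x) = x^2 R(x)$ produces the rational-function identity
\[
  -c_{1;Q}(1)\,\frac{x(1+x)}{1-x} \;=\; 2\,x^2 \sum_{\substack{\ell = 3 \\ \ell\,\text{odd}}}^{k} c_{1;Q}(\ell)\,b_\ell(x).
\]
The left-hand side grows linearly as $x \to \infty$ while each $x^2 b_\ell(x)$ tends to the finite constant $(-1)^\ell$, so the right-hand side is bounded at infinity; matching the coefficient of $x$ forces $c_{1;Q}(1) = 0$, and then the linear independence of the $b_\ell$ (they have distinct pole orders at $x = 1$, since $P_{\ell - 1}(1) = (\ell - 1)! \neq 0$) forces $c_{1;Q}(\ell) = 0$ for every odd $\ell \geq 3$.

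Finally, for even $\ell \geq 2$, Lemma \ref{Eisen_explict}(2) writes $F_\ell(\tau)$ as the Eisenstein series $G_\ell(\tau)$ plus the constant $B_\ell/(2\ell)$, which is a quasimodular form of weight $\ell$ on $\SL_2(\Z)$ (modular for $\ell \geq 4$, quasimodular for $\ell = 2$). Hence $\calU_{k; 1}(Q; q)$ is a linear combination of quasimodular forms of level $1$ and weight at most $k$, as claimed. The main obstacle is the parity step above; the remaining arithmetic of partial fractions and Eulerian polynomials is straightforward.
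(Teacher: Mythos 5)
Your proof is correct and follows essentially the same route as the paper: partial fractions, Lemma \ref{PFD_lemma}, and the $x \mapsto 1/x$ symmetry combined with the reciprocity \eqref{reciprocity} to kill the odd-$\ell$ coefficients. The only cosmetic difference is that you extract $c_{1;Q}(\ell) = 0$ for odd $\ell$ by comparing rational functions (growth at infinity plus distinct pole orders at $x = 1$), whereas the paper compares the power-series expansions $\sum_{m \geq 1} m^{\ell - 1} x^m$ directly; both are valid instances of the same uniqueness argument.
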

  \begin{proof}
    It is sufficient to prove \eqref{N=1}. We start from the decomposition given in \eqref{PFD_N=1}. After multiplying by $x$ and applying Lemma \ref{PFD_lemma}, we obtain
    \begin{align}\label{before_transform_N=1}
      \frac{Q(x)}{\Phi_1(x)^k} 
      &= \sum_{r = 1}^{k}\frac{a_{1; Q}(r)}{(r-1)!}\sum_{\ell = 1}^{r}\st{r-1}{\ell-1}\frac{x P_{\ell-1}(x)}{(1-x)^{\ell}} \notag \\
      &= \sum_{\ell = 1}^{k}c_{1; Q}(\ell)\frac{x P_{\ell-1}(x)}{(1-x)^{\ell}} \notag \\
      &= \sum_{\ell = 1}^{k}c_{1; Q}(\ell) \sum_{m \geq 1}m^{\ell-1}x^m.
    \end{align}

    We consider the transformation $x \mapsto 1/x$. By our assumption on $Q(x)$ and the $\Phi_1(x)^k = (-x)^k \Phi_1(1/x)^k$, the left-hand side of \eqref{before_transform_N=1} is invariant under this change. This implies
    \begin{align}\label{after_transform_N=1}
      \frac{Q(x)}{\Phi_1(x)^k} 
      &= \sum_{\ell = 1}^{k}(-1)^{\ell}c_{1; Q}(\ell)\frac{x^{\ell-1}P_{\ell-1}(1/x)}{(1-x)^{\ell}} \notag \\
      &= - c_{1; Q}(1) + \sum_{\ell = 1}^{k}(-1)^{\ell}c_{1; Q}(\ell)\frac{x P_{\ell-1}(x)}{(1-x)^{\ell}} \notag \\
      &= - c_{1; Q}(1) + \sum_{\ell = 1}^{k}(-1)^{\ell}c_{1; Q}(\ell)\sum_{m \geq 1}m^{\ell-1}x^m.
    \end{align}
    Here, we have used the following relation derived from \eqref{reciprocity}
    \[
    \frac{x^{\ell-1}P_{\ell-1}(1/x)}{(1-x)^{\ell}} =
    \delta_{1, \ell} + \frac{x P_{\ell - 1}(x)}{(1 - x)^{\ell}} ,
    \]
    where $\delta_{1, \ell}$ denotes the Kronecker delta, which is $1$ if $\ell = 1$ and $0$ otherwise. The decomposition of the left-hand side into such series is unique. Therefore, by comparing \eqref{before_transform_N=1} with \eqref{after_transform_N=1}, we conclude that 
    \[
    c_{1; Q}(\ell) = 0 \quad \text{for all odd $\ell$ with } 1 \leq \ell \leq k.
    \]
    Hence, we have 
    \begin{equation}\label{sort_N=1}
      \frac{Q(x)}{\Phi_1(x)^k} = \sum_{\substack{\ell = 1 \\ \ell\text{:even}}}^{k}c_{1; Q}(\ell)\sum_{m \geq 1}m^{\ell-1}x^{m}.
    \end{equation}
    Setting $x = q^n$ in \eqref{sort_N=1} and summing over all $n \in \Z_{\geq 1}$,
    \begin{align*}
      \calU_{k; 1}(Q; q) 
      &= \sum_{\substack{\ell = 1 \\ \ell\text{:even}}}^{k} c_{1; Q}(\ell) \sum_{m, n \geq 1}m^{\ell-1}q^{mn} \\
      &= \sum_{\substack{\ell = 1 \\ \ell\text{:even}}}^{k}c_{1; Q}(\ell) F_{\ell}(\tau). 
      \qedhere
    \end{align*}
  \end{proof}

  \begin{remark}
    By the definition of $c_{1; Q}(\ell)$, we have $c_{1; Q}(k) = \frac{(-1)^k Q(1)}{(k-1)!}$. Thus, the series \eqref{N=1} is of weight $k$ if and only if $Q(1) \neq 0$ and $k$ is even.
  \end{remark}

  \subsection{The Case \texorpdfstring{$N = 2$}{N = 2}}
  \begin{theorem}\label{main_N=2}
    Under the notation and assumptions of Theorem \ref{main_thm}, we have 
    \begin{equation}\label{N=2}
      \calU_{k; 2}(Q; q) = \sum_{\substack{\ell = 1 \\ \ell\mathrm{:even}}}^{k}
      c_{2; Q}(\ell) (2^{\ell}F_{\ell}(2\tau) - F_{\ell}(\tau)),
    \end{equation}
    where the coefficients $c_{2; Q}(\ell)$ are defined by 
    \[
    c_{2; Q}(\ell) \coloneq \sum_{r = \ell}^{k}\frac{a_{2; Q}(r)}{(r-1)!}\st{r-1}{\ell-1}
    \]
    with the coefficients $a_{2; Q}(r)$ uniquely determined by the partial fraction decomposition of $\frac{Q(x)/x}{\Phi_2(x)^k}$
    \[
    \frac{Q(x)/x}{\Phi_2(x)^k} = \sum_{r = 1}^{k}a_{2; Q}(r)\frac{-1}{(1 + x)^r}.
    \]
    In particular, $\calU_{k; 2}(Q; q)$ is a linear combination of quasimodular forms of level $2$ and weight at most $k$.
  \end{theorem}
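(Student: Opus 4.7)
The plan is to parallel the proof of Theorem \ref{main_N=1}, substituting $x \mapsto -x$ throughout to convert identities for $\Phi_1(x) = 1-x$ into identities for $\Phi_2(x) = 1+x$, while carefully tracking the resulting sign changes.

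First, I start from the given partial fraction decomposition of $(Q(x)/x)/\Phi_2(x)^k$, which is a proper rational function because $\deg Q < \phi(2)k = k$ and $Q(0) = 0$. Substituting $-x$ into Lemma \ref{PFD_lemma} yields
\[
\frac{1}{(1+x)^r} = \frac{1}{(r-1)!}\sum_{\ell = 1}^{r}\st{r-1}{\ell - 1}\frac{P_{\ell - 1}(-x)}{(1+x)^{\ell}}.
\]
Applying this to the decomposition, multiplying by $x$, swapping the order of summation over $r$ and $\ell$, and expanding via \eqref{Eulerian} at $-x$ gives
\[
\frac{Q(x)}{\Phi_2(x)^k} = -\sum_{\ell = 1}^{k} c_{2; Q}(\ell)\,\frac{x P_{\ell - 1}(-x)}{(1+x)^{\ell}} = \sum_{\ell = 1}^{k} c_{2; Q}(\ell) \sum_{m \geq 1}(-1)^{m} m^{\ell - 1} x^{m}.
\]

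For the symmetry step, the hypothesis $Q(x) = x^{k} Q(1/x)$ together with the identity $\Phi_2(x)^k = x^{k}\Phi_2(1/x)^k$ makes the left-hand side invariant under $x \mapsto 1/x$. To transform the right-hand side, I verify the counterpart of the identity used in the $N = 1$ case, namely
\[
\frac{x^{\ell - 1} P_{\ell - 1}(-1/x)}{(1+x)^{\ell}} = \delta_{1, \ell} + (-1)^{\ell}\,\frac{x P_{\ell - 1}(-x)}{(1+x)^{\ell}},
\]
which follows from the reciprocity \eqref{reciprocity} applied at $-1/x$ for $\ell \geq 2$ and a direct computation for $\ell = 1$. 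Substituting $x \mapsto 1/x$ and comparing power-series coefficients at $x = 0$ forces $c_{2; Q}(\ell) = (-1)^{\ell} c_{2; Q}(\ell)$ for all $\ell \geq 1$ together with the constant-term relation $0 = -c_{2; Q}(1)$, so $c_{2; Q}(\ell) = 0$ for every odd $\ell$.

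Finally, setting $x = q^{n}$ and summing over $n \geq 1$, I split the inner sum over $m$ by parity: the even part contributes $2^{\ell - 1} F_{\ell}(2\tau)$ via the rescaling $m = 2m'$, while the complementary odd part contributes $-(F_{\ell}(\tau) - 2^{\ell - 1} F_{\ell}(2\tau))$, and their sum equals $2^{\ell} F_{\ell}(2\tau) - F_{\ell}(\tau)$, yielding \eqref{N=2}. Quasimodularity then follows because each $F_{\ell}(\tau)$ differs from the Eisenstein series $G_{\ell}(\tau)$ by a constant, and Lemma \ref{V_op} places $F_{\ell}(2\tau)$ in $\widetilde{M}_{\ell}(\Gamma_0(2))$, so every summand lies in $\widetilde{M}_{\ell}(\Gamma_0(2))$. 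The main obstacle is the sign bookkeeping in the reciprocity identity and the parity decomposition, but since this proof runs entirely parallel to the $N = 1$ case, it should go through cleanly.
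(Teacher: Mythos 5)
Your proof is correct and takes essentially the same approach as the paper: the paper's proof of Theorem \ref{main_N=2} simply invokes ``in a similar way to the proof of Theorem \ref{main_N=1}'' and then performs the parity split via the identity $(-1)^m = 2\delta_{2\mid m} - 1$, which is exactly your even/odd decomposition of the sum over $m$. You merely make explicit the sign bookkeeping (the substitution $x \mapsto -x$ in Lemma \ref{PFD_lemma} and the reciprocity identity at $-1/x$) that the paper leaves implicit.
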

  \begin{proof}
    In a similar way to the proof of Theorem \ref{main_N=1}, we have
    \begin{align*}
      \calU_{k; 2}(Q; q) 
      &= \sum_{\substack{\ell = 1 \\ \ell\text{:even}}}^{k} c_{2; Q}(\ell)\sum_{m, n \geq 1}(-1)^m m^{\ell-1}q^{mn} \\
      &= \sum_{\substack{\ell = 1 \\ \ell\text{:even}}}^{k} c_{2; Q}(\ell)\sum_{m, n \geq 1}(2\delta_{2 \mid m} - 1)m^{\ell-1}q^{mn} \\
      &= \sum_{\substack{\ell = 1 \\ \ell\text{:even}}}^{k}c_{2; Q}(\ell)(2^{\ell}F_{\ell}(2\tau) - F_{\ell}(\tau)).
    \end{align*}
    Here, we have used the identity $(-1)^m = 2\delta_{2 \mid m} - 1$ to derive this expression, where $\delta_{2 \mid m}$ is $1$ if $2 \mid m$ and $0$ otherwise. Since $F_{\ell}(2\tau)$ is of level $2$ by Lemma \ref{V_op}, we conclude that $2^{\ell}F_{\ell}(2\tau) - F_{\ell}(\tau)$ is of level $2$.
  \end{proof}
  \begin{remark}
    By the definition of $c_{2; Q}(\ell)$, we have $c_{2; Q}(k) = \frac{Q(-1)}{(k-1)!}$. Thus, the series \eqref{N=2} is of weight $k$ if and only if $Q(-1) \neq 0$ and $k$ is even.
  \end{remark}

  \subsection{The Case \texorpdfstring{$N \geq 3$}{N ≥ 3}}
  \begin{theorem}\label{main_N=N}
    In addition to the notation and assumptions of Theorem \ref{main_thm}, we assume $N \in \Z_{\geq 3}$. Then, we have
    \begin{equation}\label{N=N}
      \calU_{k; N}(Q; q) = \sum_{\substack{j = 1 \\ (j, N) = 1}}^{\lfloor \frac{N-1}{2} \rfloor}
      \sum_{\ell = 1}^{k}c_{N, j; Q}(\ell)
      \sum_{g \mid N}\frac{2g^{\ell-1}}{\phi(N/g)}
      \sum_{\substack{\chi\mathrm{:mod }N/g \\ \chi(-1) = (-1)^{\ell}}}
      G(\chi)\ol{\chi(j)}F_\ell(\ol{\chi}; g\tau),
    \end{equation}
    where and the coefficients $c_{N, j; Q}(\ell)$ are defined by 
    \[
    c_{N, j; Q}(\ell) \coloneq \sum_{r = \ell}^{k}\frac{a_{N, j; Q}(r)}{(r-1)!}\st{r-1}{\ell-1}
    \]
    with the coefficients $a_{N, j; Q}(r)$ uniquely determined by the partial fraction decomposition of $\frac{Q(x)/x}{\Phi_N(x)^k}$
    \[
    \frac{Q(x)/x}{\Phi_N(x)^k} = 
    \sum_{\substack{j = 1 \\ (j, N) = 1}}^{\lfloor \frac{N-1}{2} \rfloor}
    \left(
    \sum_{r = 1}^{k}a_{N, j; Q}(r)\frac{\zeta_N^j}{(1 - \zeta_N^j x)^r} + 
    \sum_{r = 1}^{k}a'_{N, j; Q}(r)\frac{\zeta_N^{-j}}{(1 - \zeta_N^{-j} x)^r}
    \right).
    \]
  \end{theorem}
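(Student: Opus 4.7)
The plan is to follow the structure of the proofs of Theorems \ref{main_N=1} and \ref{main_N=2}, with two additional ingredients: the partial fraction decomposition now involves pairs of primitive $N$-th roots of unity $\zeta_N^{\pm j}$, and Lemma \ref{power_roots} will be used to convert powers of these roots of unity into Gauss sums twisted by Dirichlet characters.

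First, I multiply the given partial fraction decomposition by $x$ and apply Lemma \ref{PFD_lemma}, with the variable replaced by $\zeta_N^{\pm j}x$, to each summand. Using the closed form $\frac{yP_{\ell-1}(y)}{(1-y)^{\ell}} = \sum_{m \geq 1}m^{\ell-1}y^{m}$, this produces the formal power series identity
\[
\frac{Q(x)}{\Phi_N(x)^k} = \sum_{\substack{j=1 \\ (j,N)=1}}^{\lfloor (N-1)/2 \rfloor}\sum_{\ell=1}^{k}\sum_{m \geq 1}m^{\ell-1}\bigl(c_{N,j;Q}(\ell)\zeta_N^{jm} + c'_{N,j;Q}(\ell)\zeta_N^{-jm}\bigr)x^m,
\]
where $c'_{N,j;Q}(\ell)$ is defined from $a'_{N,j;Q}(r)$ in exactly the way $c_{N,j;Q}(\ell)$ is defined from $a_{N,j;Q}(r)$.

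Next, I exploit the hypothesis on $Q$: together with the identity $\Phi_N(x) = x^{\phi(N)}\Phi_N(1/x)$, it yields $\frac{Q(x)}{\Phi_N(x)^k} = \frac{Q(1/x)}{\Phi_N(1/x)^k}$ as rational functions. A direct computation using the reciprocity $P_{\ell-1}(y) = y^{\ell-2}P_{\ell-1}(1/y)$ for $\ell \geq 2$ (and handling $\ell = 1$ by hand) shows
\[
\frac{\zeta_N^{j}(1/x)\,P_{\ell-1}(\zeta_N^{j}/x)}{(1-\zeta_N^{j}/x)^{\ell}} = (-1)^{\ell}\frac{\zeta_N^{-j}x\,P_{\ell-1}(\zeta_N^{-j}x)}{(1-\zeta_N^{-j}x)^{\ell}} - \delta_{1,\ell}.
\]
Equating the transformed right-hand side with the original and invoking the linear independence of the rational functions $\frac{\zeta_N^{\pm j}xP_{\ell-1}(\zeta_N^{\pm j}x)}{(1-\zeta_N^{\pm j}x)^{\ell}}$ (they have distinct pole locations and differing orders) produces the symmetry relation
\[
c'_{N,j;Q}(\ell) = (-1)^{\ell}c_{N,j;Q}(\ell),
\]
while the stray $\ell = 1$ constants cancel automatically because $Q(0) = 0$ forces the left-hand side to have no constant term.

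Finally, I set $x = q^n$, sum over $n \geq 1$, and apply Lemma \ref{power_roots} to each $\zeta_N^{\pm jm}$; since $(j, N) = 1$, the relevant gcd simplifies to $g \coloneq (N, jm) = (N, m)$. The symmetry relation fuses the $\pm j$ contributions into the bracket $1 + (-1)^{\ell}\chi(-1)$, which vanishes unless $\chi(-1) = (-1)^{\ell}$ and otherwise contributes the factor $2$. Writing $m = gm'$, absorbing the coprimality condition $(m', N/g) = 1$ into the sum over $\chi \bmod N/g$, and recognising the remaining inner sum as $F_{\ell}(\overline{\chi}; g\tau)$ via Lemma \ref{Eisen_explict} yield \eqref{N=N}. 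I expect the main obstacle to lie in the second step: verifying the $x \mapsto 1/x$ transformation law with the correct $\ell = 1$ boundary correction, and properly justifying the linear independence needed to extract the sign relation $c'_{N,j;Q}(\ell) = (-1)^{\ell}c_{N,j;Q}(\ell)$. Once this relation is secured, the final Gauss sum manipulation is essentially algebraic bookkeeping.
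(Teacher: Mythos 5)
Your proposal is correct and follows essentially the same route as the paper: the paper likewise reduces to the argument of Theorem \ref{main_N=1} (partial fractions, Lemma \ref{PFD_lemma}, and the $x \mapsto 1/x$ symmetry to obtain the parity relation between the $\zeta_N^{j}$ and $\zeta_N^{-j}$ coefficients) and then applies Lemma \ref{power_roots} with $g = (N,m)$ to convert $\zeta_N^{\pm jm}$ into Gauss sums. Your fused bracket $1 + (-1)^{\ell}\chi(-1)$ is just the paper's split into odd-$\ell$/odd-$\chi$ and even-$\ell$/even-$\chi$ contributions written in one line, and your $\ell = 1$ boundary analysis matches the paper's $\delta_{1,\ell}$ correction.
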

  \begin{proof}
    In a similar way to the proof of Theorem \ref{main_N=1}, we have 
    {\small \[
    \calU_{k; N}(Q; q) = 
    \sum_{\substack{j = 1 \\ (j, N) = 1}}^{\lfloor \frac{N-1}{2} \rfloor}
    \left(
    \sum_{\substack{\ell = 1 \\ \ell\text{:odd}}}^{k}c_{N, j; Q}(\ell)
    \sum_{m, n \geq 1}(\zeta_N^{jm} - \zeta_{N}^{-jm})m^{\ell-1}q^{mn} + 
    \sum_{\substack{\ell = 1 \\ \ell\text{:even}}}^{k}c_{N, j; Q}(\ell)
    \sum_{m, n \geq 1}(\zeta_N^{jm} + \zeta_{N}^{-jm})m^{\ell-1}q^{mn}
    \right).
    \]}
    Set $N' = N/g$ and $m' = m/g$ with $g \coloneq (N, m)$. Then, from Lemma \ref{power_roots}, we obtain
    \begin{align*}
      \sum_{m, n \geq 1}(\zeta_N^{jm} - \zeta_N^{-jm})m^{\ell-1}q^{mn}
      &= \sum_{g \mid N}\sum_{\substack{m' \geq 1 \\ (m', N') = 1}}\sum_{n \geq 1}
      \left(
      \frac{2}{\phi(N')}\sum_{\substack{\chi\text{:mod }N' \\ \chi\text{:odd}}}
      G(\chi)\ol{\chi(j)}\ol{\chi(m')}
      \right)(gm')^{\ell-1}q^{(gm')n} \\
      &= \sum_{g \mid N}\frac{2g^{\ell-1}}{\phi(N/g)}
      \sum_{\substack{\chi\text{:mod }N/g \\ \chi\text{:odd}}}G(\chi)\ol{\chi(j)}
      \sum_{m', n \geq 1}\ol{\chi(m')}{m'}^{\ell-1}(q^{g})^{m' n} \\
      &= \sum_{g \mid N}\frac{2g^{\ell-1}}{\phi(N/g)}
      \sum_{\substack{\chi\text{:mod }N/g \\ \chi\text{:odd}}}G(\chi)\ol{\chi(j)}
      F_{\ell}(\ol{\chi}; g\tau).
    \end{align*}
    
    By a similar procedure, we also obtain 
    \[
    \sum_{m, n \geq 1}(\zeta_N^{jm} + \zeta_N^{-jm})m^{\ell-1}q^{mn} = 
    \sum_{g \mid N}\frac{2g^{\ell-1}}{\phi(N/g)}
    \sum_{\substack{\chi\text{:mod }N/g \\ \chi\text{:even}}}
    G(\chi)\ol{\chi(j)}F_{\ell}(\ol{\chi};g\tau).
    \]
    Thus, by Lemma \ref{V_op}, we conclude that $F_{\ell}(\chi; g\tau)$ is of level at most $N$.
  \end{proof}

  \begin{remark}
  \leavevmode
  \begin{enumerate}[label = (\arabic*)]
    \item Recall that there exists a primitive Dirichlet character modulo $m$ if and only if $m \not\equiv 2 \pmod{4}$. Thus, the sum \eqref{N=N} always contains a candidate term of level $N$. Indeed, we can choose $g = 1$ if $N \not\equiv 2 \pmod{4}$, and $g = 2$ if $N \equiv 2 \pmod{4}$.
    \item By the definition of $c_{N, j; Q}(\ell)$, we have $c_{N, j; Q}(k) = \frac{1}{(k-1)!}a_{N, j; Q}(k)$. Then, we can evaluate
    \[
    a_{N, j; Q}(k) = \lim_{x \to \zeta_{N}^{-j}}
    \left[
    (1 - \zeta_N^j x)^k \frac{Q(x)}{\Phi_N(x)^k}
    \right] = 
    \left(
    - \frac{1}{N}\prod_{\substack{d \mid N \\ d < N}}\Phi_d(\zeta_N^{-j})
    \right)^k Q(\zeta_N^{-j}).
    \]
    For any $j$ with $(j, N) = 1$ and all $d \mid N$ with $d < N$, we have $\Phi_d(\zeta_N^{-j}) \neq 0$. Thus, the sum \eqref{N=N} is of weight $k$ and level $N$ if and only if $Q(\zeta_N^{-j}) \neq 0$ for some $j$. 
  \end{enumerate}
  \end{remark}
  
  \section{Proof of Theorem \ref{main_thm} \ref{decomposition_main}}
  \label{Prf_of_main2}

  The proof of Theorem \ref{main_thm} \ref{decomposition_main} requires the following lemma.

  \begin{lemma}\label{multisum_decomposition_lemma}
    For $k, N \in \Z_{\geq 1}$ and a polynomial $Q(x)$, we have
    \begin{align}
      \exp\left(\sum_{m \geq 1}\frac{(-1)^{m-1}}{m}\calU_{mk; N}(Q^m; q)X^m \right) &= 1 + \sum_{j \geq 1}\calU_{j, k; N}(Q; q)X^j, \label{non_eq} \\ 
      \exp\left(\sum_{m \geq 1}\frac{1}{m}\calU_{mk; N}(Q^m; q)X^m \right) &= 1 + \sum_{j \geq 1}\calU^{\star}_{j, k; N}(Q; q)X^j. \label{with_eq}
    \end{align}
  \end{lemma}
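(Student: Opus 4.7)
The plan is to recognize that the three $q$-series families appearing in the lemma are classical symmetric functions in a common infinite alphabet, and then invoke the standard generating-function identities relating them. Set $a_n = Q(q^n)/\Phi_N(q^n)^k$. Directly from their definitions, $\calU_{j, k; N}(Q; q)$ and $\calU_{j, k; N}^{\star}(Q; q)$ are the $j$-th elementary and complete homogeneous symmetric functions $e_j(a)$ and $h_j(a)$ in the variables $a_1, a_2, \ldots$, while $\calU_{mk; N}(Q^m; q) = \sum_{n \geq 1} a_n^m$ is the $m$-th power sum $p_m(a)$.

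With this dictionary in hand, the two identities reduce to the classical generating-function identities
\[
\prod_{n \geq 1}(1 + a_n X) = \exp\!\left(\sum_{m \geq 1}\frac{(-1)^{m-1}}{m}p_m(a)\, X^m\right), \qquad \prod_{n \geq 1}\frac{1}{1 - a_n X} = \exp\!\left(\sum_{m \geq 1}\frac{1}{m}p_m(a)\, X^m\right).
\]
I would derive both by taking the logarithm of each factor on the left, expanding via $\log(1 + a_n X) = \sum_{m \geq 1}(-1)^{m-1} a_n^m X^m/m$ (respectively $-\log(1 - a_n X) = \sum_{m \geq 1} a_n^m X^m/m$), and interchanging the $n$- and $m$-summations to convert $\sum_{n \geq 1} a_n^m$ into $p_m(a)$. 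Comparing with the standard expansions $\prod_n (1 + a_n X) = 1 + \sum_{j \geq 1} e_j(a)\, X^j$ and $\prod_n (1 - a_n X)^{-1} = 1 + \sum_{j \geq 1} h_j(a)\, X^j$ then yields \eqref{non_eq} and \eqref{with_eq}, respectively.

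The main obstacle is making these formal manipulations rigorous. In the intended application (Theorem \ref{main_thm}, where $Q(0) = 0$), we have $a_n \in q^n \C[\![q]\!]$, so the infinite products $\prod_n(1 \pm a_n X)$ are well-defined elements of $\C[\![q]\!][\![X]\!]$ with constant term $1$, their logarithms and exponentials exist as formal power series in $X$ over $\C[\![q]\!]$, and each coefficient of $X^j$ modulo a fixed power of $q$ depends on only finitely many $a_n$. All reorderings of summation are therefore legitimate, and no analytic convergence is needed.
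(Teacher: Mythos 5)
Your proposal is correct and follows essentially the same route as the paper: the paper likewise writes $1+\sum_j \calU_{j,k;N}(Q;q)X^j$ as the product $\prod_{n\geq 1}\bigl(1+\frac{Q(q^n)}{\Phi_N(q^n)^k}X\bigr)$, takes logarithms, expands term by term, and interchanges the $n$- and $m$-summations, which is exactly your derivation of the $e$--$p$ and $h$--$p$ generating-function identities specialized to $a_n=Q(q^n)/\Phi_N(q^n)^k$. Your added remark on formal convergence (requiring $Q(0)=0$ so that $a_n\in q^n\C[\![q]\!]$) is a point the paper leaves implicit, and is a welcome clarification rather than a divergence in method.
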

  \begin{proof}
    We only prove \eqref{non_eq}, since the proof of \eqref{with_eq} is similar. Let $F(X)$ denote the right-hand side of \eqref{non_eq}, which can be rewritten as the following infinite product
    \[
    F(X) = \prod_{n \geq 1}\left(1 + \frac{Q(q^n)}{\Phi_N(q^n)^k}X\right).
    \]
    Taking logarithm of both sides gives 
    \begin{align*}
      \log F(X) 
      &= \sum_{n \geq 1}\log\left(1 + \frac{Q(q^n)}{\Phi_N(q^n)^k}X\right) \\
      &= \sum_{n \geq 1}\left(
      \sum_{m \geq 1}\frac{(-1)^{m-1}}{m}
      \left(
      \frac{Q(q^n)}{\Phi_N(q^n)^k}X
      \right)^m
      \right) \\
      &= \sum_{m \geq 1}\frac{(-1)^{m-1}}{m}\left(
      \sum_{n \geq 1}\frac{Q^m(q^n)}{\Phi_N(q^n)^{mk}}
      \right)X^m \\ 
      &= \sum_{m \geq 1}\frac{(-1)^{m-1}}{m}\calU_{mk; N}(Q^m; q)X^m .
    \end{align*}
    This shows the desired result.
  \end{proof}

  For a formal power series $A(X) = \sum_{m \geq 1}a_m X^m$, we have the exponential formula (see e.g. {\cite{Stanley_EC2}*{Example 5.2.10}})
  \[
  \exp(A(X)) = 1 + \sum_{n \geq 1}\left(\sum_{\lambda \vdash n}\prod_{s = 1}^{n} \frac{a_s^{m_{\lambda, s}}}{m_{\lambda, s}\,!}\right)X^n,
  \]
  where $m_{\lambda, s}$ denotes the multiplicity of $s$ in the partition $\lambda$ of $n$. Thus, setting $a_m = \frac{(-1)^{m-1}}{m}\calU_{mk; N}(Q^m; q)$, we obtain
  \begin{equation}\label{non_eq_decompsition}
    \exp\left(
    \frac{(-1)^{m-1}}{m}\calU_{mk; N}(Q^m; q) X^m
    \right) = 1 + \sum_{n \geq 1}\sum_{\lambda \vdash n}\prod_{s = 1}^{n}
    \frac{1}{m_{\lambda, s}\, !}\left(
    \frac{(-1)^{s-1}\calU_{sk; N}(Q^s; q)}{s}
    \right)^{m_{\lambda, s}} X^n
  \end{equation}
  and setting $a_m = \frac{1}{m}\calU_{mk; N}(Q^m; q)$, we also obtain
  \begin{equation}\label{with_eq_decompsition}
    \exp\left(
    \frac{1}{m}\calU_{mk; N}(Q^m; q) X^m
    \right) = 1 + \sum_{n \geq 1}\sum_{\lambda \vdash n}\prod_{s = 1}^{n}
    \frac{1}{m_{\lambda, s}\, !}\left(
    \frac{\calU_{sk; N}(Q^s; q)}{s}
    \right)^{m_{\lambda, s}} X^n.
  \end{equation}
  Then, by comparing the coefficients the coefficients of $X^t\ (t \geq 1)$ on the right-hand sides of \eqref{non_eq} and \eqref{non_eq_decompsition}, and those of \eqref{with_eq} and \eqref{with_eq_decompsition}, we obtain the desired results
  \begin{align*}
  \calU_{t, k; N}(Q; q) &= \sum_{\lambda \vdash t}\prod_{s = 1}^{t}
    \frac{1}{m_{\lambda, s}\, !}\left(
    \frac{(-1)^{s-1}\calU_{sk; N}(Q^s; q)}{s}
    \right)^{m_{\lambda, s}}, \\ 
  \calU^{\star}_{t, k; N}(Q; q) &= \sum_{\lambda \vdash t}\prod_{s = 1}^{t}
    \frac{1}{m_{\lambda, s}\, !}\left(
    \frac{\calU_{sk; N}(Q^s; q)}{s}
    \right)^{m_{\lambda, s}}.
  \end{align*}

  Since $\sum_{s = 1}^{t}s\, m_{\lambda, s} = t$, it is easy to check that $\calU_{t, k; N}(Q; q)$ and $\calU^{\star}_{t, k; N}(Q; q)$ are isobaric polynomials of weight $t$. The claim about the level follows directly from Theorem \ref{main_thm} \ref{quasimodularity_main}.

  \appendix
  \section{Explicit formulas for \texorpdfstring{$a_{N, j; Q}(r)$}{a\_\{N, j; Q\}(r)} and \texorpdfstring{$c_{N, j; Q}(\ell)$}{c\_\{N, j; Q\}(l)}}\label{appedix}

  We present explicit formulas for the coefficients defined in Section \ref{Prf_of_main1}. Since the other cases can be derived similarly, we restrict our proof to the case $N \geq 3$.

  \begin{proposition}\label{prop:a_1_r}
    Let $k \in \Z_{\geq 1}$ and $Q(x)$ be a polynomial of degree less than $k$ with $Q(0) = 0$. Then, we have 
    \[
      a_{1; Q}(r) = \sum_{m = 0}^{k-r}(-1)^{m + k}\frac{Q^{(m)}(1)}{m!},\ 
      a_{2; Q}(r) = \sum_{m = 0}^{k-r}\frac{Q^{(m)}(-1)}{m!}
    \]
    for all $1 \leq r \leq k$.
  \end{proposition}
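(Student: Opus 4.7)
The plan is to convert each defining partial fraction decomposition into a polynomial identity and then read off $a_{N; Q}(r)$ as a specific Taylor coefficient of $R(x) \coloneq Q(x)/x$, which is a genuine polynomial thanks to $Q(0) = 0$.

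For $N = 1$, I would use $\Phi_1(x)^k = (-1)^k(1-x)^k$ to clear denominators: multiplying the defining decomposition of $R(x)/\Phi_1(x)^k$ by $(1-x)^k$ yields the polynomial identity
\[
(-1)^k R(x) = \sum_{r=1}^{k} a_{1; Q}(r)(1 - x)^{k-r}.
\]
Substituting $y = 1 - x$ and extracting the coefficient of $y^{k-r}$ gives $a_{1; Q}(r) = (-1)^r R^{(k-r)}(1)/(k-r)!$. The remaining task is to express this derivative of $R$ in terms of derivatives of $Q$. Multiplying the Taylor expansions $Q(x) = \sum_{m}\frac{Q^{(m)}(1)}{m!}(x-1)^m$ and $\frac{1}{x} = \sum_{n \geq 0}(-1)^n(x-1)^n$ around $x = 1$ produces
\[
\frac{R^{(k-r)}(1)}{(k-r)!} = \sum_{m=0}^{k-r}(-1)^{k-r-m}\frac{Q^{(m)}(1)}{m!},
\]
and the sign collapse $(-1)^{r} \cdot (-1)^{k - r - m} = (-1)^{k+m}$ then yields the claimed formula.

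The $N = 2$ case is entirely analogous. Since $\Phi_2(x)^k = (1+x)^k$, the analogous identity reads $R(x) = -\sum_{r=1}^k a_{2; Q}(r)(1+x)^{k-r}$. Substituting $y = 1 + x$ and comparing coefficients of $y^{k-r}$ gives $a_{2; Q}(r) = -R^{(k-r)}(-1)/(k-r)!$. The Taylor expansion of $1/x$ around $x = -1$ is $-\sum_{n \geq 0}(x+1)^n$, whose sign cancels the leading $-1$ in front of the sum, so all alternating signs drop out and one arrives at $a_{2; Q}(r) = \sum_{m=0}^{k-r}Q^{(m)}(-1)/m!$.

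\emph{Main obstacle:} this is essentially sign bookkeeping; there are no analytic inputs. The hypothesis $Q(0) = 0$ makes $R = Q/x$ a polynomial, and the formal Taylor expansions of $1/x$ combine with the finite expansion of $Q$ to give $R$ term by term. The only places requiring care are the sign simplification $(-1)^{r + k - r - m} = (-1)^{k+m}$ in the $N = 1$ case and the cancellation between the leading minus sign in the $N = 2$ decomposition and the Taylor expansion $1/x = -\sum_n(x+1)^n$ at $x = -1$. A consistency check, which also confirms $Q(0) = 0$ is the correct hypothesis, is that the top-degree Taylor coefficient of $R$ vanishes precisely because $\sum_m (-1)^m Q^{(m)}(1)/m! = Q(0) = 0$.
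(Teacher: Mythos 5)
Your proof is correct and is essentially the paper's own method: the paper only writes out the analogous computation for the $N \geq 3$ case (clearing the denominator $(1-\zeta_N^j x)^k$ and reading the coefficients off the Taylor expansion at the pole, with the cumulative sum $\sum_{m=0}^{k-r}$ arising by telescoping the differences created by the extra factor $x$), and declares the $N=1,2$ cases ``similar,'' which is precisely what you carry out. The only cosmetic difference is that in your version the cumulative sum comes from convolving the Taylor series of $Q$ with the geometric series for $1/x$ at $x = \pm 1$ rather than from telescoping; the sign bookkeeping checks out in both cases (e.g.\ for $N=2$, $k=4$, $Q(x)=x^2$ your formula gives $a_{2;Q}(4)=1$, $a_{2;Q}(3)=-1$, $a_{2;Q}(2)=a_{2;Q}(1)=0$, consistent with the appendix).
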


  \begin{proposition}
    Under the same assumptions as in Proposition \ref{prop:a_1_r}, we have 
    \[
      c_{1; Q}(\ell) = \frac{1}{(k-1)!}\sum_{r = \ell}^{k}(-1)^{r}Q^{(k-r)}(1)\binom{k-1}{r-1}\st{r}{\ell},\ 
      c_{2; Q}(\ell) = \frac{1}{(k-1)!}\sum_{r = \ell}^{k}Q^{(k-r)}(-1)\binom{k-1}{r-1}\st{r}{\ell}
    \]
    for all $1 \leq \ell \leq k$.
  \end{proposition}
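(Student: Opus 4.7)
The plan is to substitute the explicit formulas for $a_{1; Q}(r)$ and $a_{2; Q}(r)$ from Proposition \ref{prop:a_1_r} into the definitions
\[
c_{1;Q}(\ell) = \sum_{r = \ell}^{k}\frac{a_{1; Q}(r)}{(r-1)!}\st{r-1}{\ell-1}, \qquad c_{2;Q}(\ell) = \sum_{r = \ell}^{k}\frac{a_{2; Q}(r)}{(r-1)!}\st{r-1}{\ell-1}
\]
and then reorganize the result. Since every expression in sight is $\C$-linear in $Q$, it suffices to match the coefficients of each derivative $Q^{(m)}(\pm 1)$ separately.

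Focusing on $c_{1; Q}(\ell)$, after substitution and exchanging the order of summation over $r$ and $m$, I would obtain
\[
c_{1; Q}(\ell) = \sum_{m=0}^{k-\ell}(-1)^{m+k}\frac{Q^{(m)}(1)}{m!}\sum_{r=\ell}^{k-m}\frac{\st{r-1}{\ell-1}}{(r-1)!}.
\]
The crux is to evaluate the inner sum via the identity
\[
\sum_{r=\ell}^{K}\frac{\st{r-1}{\ell-1}}{(r-1)!} = \frac{\st{K}{\ell}}{(K-1)!} \qquad (K \geq \ell),
\]
which I would establish by induction on $K$. The base case $K = \ell$ is immediate from $\st{\ell-1}{\ell-1} = \st{\ell}{\ell} = 1$, and the inductive step reduces to verifying that the telescoped difference
\[
\frac{\st{K}{\ell}}{(K-1)!} - \frac{\st{K-1}{\ell}}{(K-2)!} = \frac{\st{K-1}{\ell-1}}{(K-1)!},
\]
which follows directly from the standard recurrence $\st{K}{\ell} = (K-1)\st{K-1}{\ell} + \st{K-1}{\ell-1}$ for unsigned Stirling numbers of the first kind after dividing by $(K-1)!$.

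Applying this identity with $K = k-m$ and then re-indexing $r = k-m$ rewrites the coefficient of $Q^{(m)}(1)$ as
\[
\frac{(-1)^{k-m}}{m!\,(k-m-1)!}\st{k-m}{\ell} = \frac{(-1)^{r}}{(k-1)!}\binom{k-1}{r-1}\st{r}{\ell},
\]
yielding the claimed closed form. The proof for $c_{2; Q}(\ell)$ is virtually identical and slightly simpler, since $a_{2; Q}(r)$ carries no alternating sign and the same Stirling identity applies verbatim. The main (mild) obstacle is spotting and verifying the Stirling number identity above; once that is in hand, the rest is algebraic bookkeeping.
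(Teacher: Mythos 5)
Your proposal is correct: the substitution, the interchange of the sums over $r$ and $m$, the range bookkeeping ($\ell \le r \le k-m$), and the final re-indexing $r = k-m$ all check out, and the identity
\[
\sum_{r=\ell}^{K}\frac{1}{(r-1)!}\st{r-1}{\ell-1} = \frac{1}{(K-1)!}\st{K}{\ell}
\]
does follow by induction from the recurrence $\st{K}{\ell} = (K-1)\st{K-1}{\ell} + \st{K-1}{\ell-1}$ exactly as you say. The paper proves only the $N\ge 3$ analogue and declares the $N=1,2$ cases "similar"; its template argument packages the same computation into the generating function $\mathcal{C}(X)=\sum_{\ell}c(\ell)X^{\ell-1}$, rewrites $\sum_{\ell}\st{r-1}{\ell-1}X^{\ell-1}=(X)_{r-1}$ so that $\frac{(X)_{r-1}}{(r-1)!}=\binom{X+r-2}{r-1}$, collapses the sum over $r$ with the hockey-stick identity $\sum_{i=0}^{n}\binom{x+i}{i}=\binom{x+n+1}{n}$, and then extracts the coefficient of $X^{\ell-1}$ from $\binom{X+M}{M}$ via $\frac{1}{M!}\st{M+1}{\ell}$. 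Your Stirling identity is precisely the coefficient-of-$X^{\ell-1}$ shadow of that hockey-stick step, so the two proofs have the same combinatorial content; what yours buys is self-containedness (a two-line induction from the standard recurrence, no generating function and no auxiliary binomial identity), at the cost of the slightly tidier one-shot collapse that the generating-function form provides. Either is acceptable; no gaps.
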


  \begin{proposition}\label{prop:a_N_j_r}
    Let $k \in \Z_{\geq 1}$ and $N \in \Z_{\geq 3}$. Let $Q(x)$ be a polynomial of degree less than $\phi(N)k$ with $Q(0) = 0$. Then, we have
    \begin{equation}\label{a_N_j_Q_r}
      a_{N, j; Q}(r) = \sum_{m = 0}^{k-r}(-\zeta_N^{-j})^m \frac{A_j^{(m)}(\zeta_N^{-j})}{m!}
    \end{equation}
    for all $1 \leq r \leq k$ and $1 \leq j \leq \lfloor (N-1)/2 \rfloor$ such that $(j, N) = 1$. Here, $A_{j}(x)$ is given by 
    \[
      A_j(x) \coloneq (1 - \zeta_N^j x)^k \frac{Q(x)}{\Phi_N(x)^k}.
    \]
  \end{proposition}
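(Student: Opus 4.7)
The plan is to extract $a_{N,j;Q}(r)$ from the local principal part of $\frac{Q(x)/x}{\Phi_N(x)^k}$ at the pole $x = \zeta_N^{-j}$. Fix $j$ with $(j,N)=1$ and $1 \le j \le \lfloor (N-1)/2 \rfloor$. Since $Q(0)=0$ and $\deg Q < \phi(N)k$, the rational function $\frac{Q(x)/x}{\Phi_N(x)^k}$ is proper, and the partial fraction decomposition stated in Theorem \ref{main_N=N} isolates its principal part at $x=\zeta_N^{-j}$ as $\sum_{r=1}^{k} a_{N,j;Q}(r)\,\frac{\zeta_N^{j}}{(1-\zeta_N^{j} x)^{r}}$, independently of the conjugate pole $\zeta_N^{j}$ (whose principal part is packaged into the $a'_{N,j;Q}$ series). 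It therefore suffices to expand $\frac{Q(x)/x}{\Phi_N(x)^k}$ as a formal Laurent series in the local coordinate $u \coloneq 1-\zeta_N^{j} x$ and read off the coefficient of $u^{-r}$ for each $1 \le r \le k$.

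Under the substitution $x = \zeta_N^{-j}(1-u)$, which also gives $x-\zeta_N^{-j} = -\zeta_N^{-j}\,u$, I would expand the three factors of $\frac{Q(x)/x}{\Phi_N(x)^k}$ separately. First, by the very definition of $A_j$, one has $\frac{Q(x)}{\Phi_N(x)^k} = A_j(x)\,u^{-k}$. Second, Taylor's formula at $\zeta_N^{-j}$ yields
\[
A_j(x) = \sum_{m\ge 0}\frac{A_j^{(m)}(\zeta_N^{-j})}{m!}\,(-\zeta_N^{-j})^{m}\,u^{m}.
\]
Third, a geometric series gives $\frac{1}{x} = \frac{\zeta_N^{j}}{1-u} = \zeta_N^{j}\sum_{s\ge 0} u^{s}$. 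Multiplying these and collecting the degree $-r$ term for $1 \le r \le k$ amounts to summing over pairs $(m,s)$ with $m,s\ge 0$ and $m+s=k-r$, which produces the coefficient
\[
\zeta_N^{j}\sum_{m=0}^{k-r}(-\zeta_N^{-j})^{m}\,\frac{A_j^{(m)}(\zeta_N^{-j})}{m!}
\]
of $u^{-r}$. Matching this against the coefficient $\zeta_N^{j}\,a_{N,j;Q}(r)$ prescribed by the partial fraction decomposition yields \eqref{a_N_j_Q_r}.

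The only real obstacle is bookkeeping: keeping the various $\zeta_N^{\pm j}$ factors straight, and verifying that no polynomial (entire) contribution or cross-interference from the other conjugate pole spoils the coefficient extraction at $x=\zeta_N^{-j}$. Both concerns are dispatched cheaply—the degree bound on $Q$ ensures that $\frac{Q(x)/x}{\Phi_N(x)^k}$ is proper, and the uniqueness of the partial fraction decomposition decouples the principal parts at $\zeta_N^{j}$ and $\zeta_N^{-j}$—so the argument reduces to the single local expansion above.
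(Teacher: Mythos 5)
Your proof is correct and follows essentially the same route as the paper: both extract $a_{N,j;Q}(r)$ from the local expansion in the coordinate $u=1-\zeta_N^{j}x$ at the pole $x=\zeta_N^{-j}$, using the Taylor expansion of $A_j$ and the uniqueness of the principal part. The only (harmless) difference is bookkeeping: you keep the factor $1/x$ and expand it as the geometric series $\zeta_N^{j}\sum_{s\ge 0}u^{s}$, obtaining the sum $\sum_{m=0}^{k-r}$ directly as a Cauchy product, whereas the paper first multiplies by $x$ and then recovers $a_{N,j;Q}(r)$ by telescoping the Taylor coefficients $b_m=a_{N,j;Q}(k-m)-a_{N,j;Q}(k-m+1)$.
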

  \begin{proof}
    Collecting the terms regular at $x = \zeta_N^{-j}$ and denoting them by $R_j(x)$, we can write
    \[
    \frac{Q(x)}{\Phi_N(x)^k} = \sum_{r = 1}^{k}a_{N, j; Q}(r)\frac{\zeta_N^j x}{(1 - \zeta_N^j x)^r} + R_j(x).
    \]
    Hence, multiplying both sides by $(1 - \zeta_N^j x)^k$, we obtain 
    \begin{equation}\label{A_j}
      A_j(x) = \sum_{r = 1}^{k}a_{N, j; Q}(r)\zeta_N^j x (1 - \zeta_N^j x)^{k-r} + (1 - \zeta_N^j x)^k R_j(x).
    \end{equation}
    Now, let $u = 1 - \zeta_N^j x$ and $f(u) = A_j(x)$. Expanding $f(u)$ in a Taylor series around $u = 0$, we write 
    \[
    f(u) = \sum_{m \geq 0}b_m u^m.
    \]
    With this substitution, \eqref{A_j} can be rewritten as
    \begin{align*}
      f(u) 
      &= \sum_{r = 1}^{k}a_{N, j; Q}(r) (1-u)u^{k-r} + O(u^k) \\
      &= (1-u)\left(
      a_{N, j; Q}(k) + a_{N, j; Q}(k-1)u + \cdots + a_{N, j; Q}(1)u^{k-1}
      \right) + O(u^k) \\
      &= a_{N, j; Q}(k) + \left(a_{N, j; Q}(k-1) - a_{N, j; Q}(k)\right)u + 
      \cdots + \left(a_{N, j; Q}(1) - a_{N, j; Q}(2)\right)u^{k-1} + O(u^k), 
    \end{align*}
    where $O(u^k)$ denotes terms of degree at most $k$.  That is, we have 
    \[
      b_m = 
      \begin{cases}
        a_{N, j; Q}(k) & \text{if } m = 0, \\
        a_{N, j; Q}(k-m) - a_{N, j; Q}(k-m+1) & \text{if } 1 \leq m \leq k-1,
      \end{cases}
    \]
    which yields 
    \[
    a_{N, j; Q}(r) = \sum_{m = 0}^{k-r}b_m.
    \]
    By direct calculation, we obtain $b_m = (-\zeta_N^{-j})^m \frac{A_j^{(m)}(\zeta_N^{-j})}{m!}$, which implies the desired result.
  \end{proof}

  \begin{proposition}
    Under the same assumptions and notations as in Proposition \ref{prop:a_N_j_r}, we have 
    \[
      c_{N, j; Q}(\ell) = \frac{1}{(k-1)!}\sum_{r = \ell}^{k}(-\zeta_N^{-j})^{k-r}A_j^{(k-r)}(\zeta_N^{-j})\binom{k-1}{r-1}\st{r}{\ell}
    \]
    for all $1 \leq \ell \leq k$ and $1 \leq j \leq \lfloor (N-1)/2 \rfloor$ such that $(j, N) = 1$.
  \end{proposition}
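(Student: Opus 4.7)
The plan is to substitute the explicit formula for $a_{N, j; Q}(r)$ from Proposition \ref{prop:a_N_j_r} directly into the definition
\[
c_{N, j; Q}(\ell) = \sum_{r = \ell}^{k}\frac{a_{N, j; Q}(r)}{(r-1)!}\st{r-1}{\ell-1},
\]
producing a double sum over pairs $(r, m)$ with $\ell \leq r \leq k$ and $0 \leq m \leq k - r$. Since the target formula features $A_j^{(k-r)}$ rather than a general derivative, I would reindex via $r' = k - m$ and swap the order of summation, so that $r'$ runs from $\ell$ to $k$ and $r$ from $\ell$ to $r'$. The coefficient of $(-\zeta_N^{-j})^{k-r'} A_j^{(k-r')}(\zeta_N^{-j})$ then collects into
\[
\frac{1}{(k-r')!}\sum_{r=\ell}^{r'}\frac{\st{r-1}{\ell-1}}{(r-1)!}.
\]

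The core combinatorial step is the Stirling identity
\[
\sum_{r=\ell}^{r'}\frac{\st{r-1}{\ell-1}}{(r-1)!} = \frac{\st{r'}{\ell}}{(r'-1)!},
\]
which I would prove by induction on $r'$ using the classical recurrence $\st{n+1}{k} = n\st{n}{k} + \st{n}{k-1}$: in the inductive step, adding the new term $\st{r'}{\ell-1}/r'!$ to $\st{r'}{\ell}/(r'-1)!$ and clearing denominators gives exactly $\st{r'+1}{\ell}/r'!$ by the recurrence. Substituting this back and rewriting $\tfrac{1}{(k-r')!(r'-1)!} = \tfrac{1}{(k-1)!}\binom{k-1}{r'-1}$ yields precisely the stated formula after renaming $r' \mapsto r$.

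The argument is essentially bookkeeping once Proposition \ref{prop:a_N_j_r} is in hand, so the only substantive ingredient is the Stirling identity above. I do not anticipate any serious obstacle; the main point of care is the correct range of summation after the swap (namely $\ell \leq r \leq r' \leq k$), which accounts for the appearance of $\binom{k-1}{r-1}$ in the final answer. As the appendix already advertises, the cases $N = 1, 2$ follow by the same template with $\zeta_N^{-j}$ specialized to $1$ and $-1$, and with $A_j(x)$ replaced by the analogous local factor, so a single computation suffices.
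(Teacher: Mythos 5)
Your proposal is correct and is essentially the paper's own argument: the paper carries out the same substitution-and-swap at the level of the generating function $\mathcal{C}_{N,j;Q}(X)=\sum_{\ell}c_{N,j;Q}(\ell)X^{\ell-1}$ and invokes the hockey-stick identity $\sum_{i=0}^{n}\binom{x+i}{i}=\binom{x+n+1}{n}$, of which your Stirling identity $\sum_{r=\ell}^{r'}\st{r-1}{\ell-1}/(r-1)!=\st{r'}{\ell}/(r'-1)!$ is precisely the coefficient of $X^{\ell-1}$. Your inductive verification via $\st{n+1}{k}=n\st{n}{k}+\st{n}{k-1}$, the reindexing $r'=k-m$ over the triangle $\ell\leq r\leq r'\leq k$, and the final rewriting $\tfrac{1}{(k-r')!(r'-1)!}=\tfrac{1}{(k-1)!}\binom{k-1}{r'-1}$ all check out.
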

  \begin{proof}
    We consider the generating function of $c_{N, j; Q}(\ell)$ given by 
    \begin{equation}\label{GF_of_c_NjQ}
      \mathcal{C}_{N,j; Q}(X) \coloneq \sum_{\ell = 1}^{k}c_{N, j; Q}(\ell)X^{\ell-1}.
    \end{equation}
    Then, by the definition of $c_{N, j; Q}(\ell)$, \eqref{GF_of_c_NjQ} becomes 
    \begin{align*}
      \mathcal{C}_{N, j; Q}(X) 
      &= \sum_{\ell = 1}^{k}\left(\sum_{r = \ell}^{k}\frac{a_{N, j; Q}(r)}{(r-1)!}\st{r-1}{\ell-1}\right)X^{\ell-1} \\ 
      &= \sum_{r = 1}^{k}\frac{a_{N, j; Q}(r)}{(r-1)!}\sum_{\ell = 1}^{k}\st{r-1}{\ell-1}X^{\ell-1} \\
      &= \sum_{r = 1}^{k}\frac{a_{N, j; Q}(r)}{(r-1)!}(X)_{r-1} \\
      &= \sum_{r = 1}^{k}a_{N, j; Q}(r)\binom{X+r-2}{r-2}.
    \end{align*}
    Substituting \eqref{a_N_j_Q_r} into the above equation, we have 
    \begin{align*}
      \mathcal{C}_{N, j; Q}(X) 
      &= \sum_{r = 1}^{k}\left(
      \sum_{m = 0}^{k-r}(-\zeta_N^{-j})^m \frac{A_j^{(m)}(\zeta_N^{-j})}{m!}
      \right)\binom{X+r-2}{r-1} \\
      &= \sum_{m = 0}^{k-1}(-\zeta_N^{-j})^m \frac{A_j^{(m)}(\zeta_N^{-j})}{m!}\sum_{r = 1}^{k-m}\binom{X+r-2}{r-1} \\
      &= \sum_{m = 0}^{k-1}(-\zeta_N^{-j})^m \frac{A_j^{(m)}(\zeta_N^{-j})}{m!}\binom{X+k-m-1}{k-m-1}.
    \end{align*}
    Here, we have used the identity 
    \[
      \sum_{i = 0}^{n}\binom{x+i}{i} = \binom{x+n+1}{n}.
    \]
    Since the coefficients of $X^{\ell-1}$ in $\binom{X+M}{M}$ is given by $\frac{1}{M!}\st{M+1}{\ell}$, we have
    \begin{align*}
      c_{N, j; Q}(\ell) 
      &= \sum_{m = 0}^{k-\ell}\frac{(-\zeta_N^{-j})^m}{m!(k-m-1)!}A_j^{(m)}(\zeta_N^{-j})\st{k-m}{\ell} \\
      &= \frac{1}{(k-1)!}\sum_{m = 0}^{k-\ell}(-\zeta_N^{-j})^m A_j^{(m)}(\zeta_N^{-j})\binom{k-1}{m}\st{k-m}{\ell} \\
      &= \frac{1}{(k-1)!}\sum_{r = \ell}^{k}(-\zeta_N^{-j})^{k-r}A_j^{(k-r)}(\zeta_N^{-j})\binom{k-1}{r-1}\st{r}{\ell}.
      \qedhere
    \end{align*}
  \end{proof}

  By applying these identities, we reproduce the results in {\cite{nazaroglu2025}*{Example 1.2}}. 
  
  First, note that $\calU_{2, 2}(2; q)$ corresponds to the case $N = 2,\ k = 4$, and $Q(x) = x^2$. Thus, we obtain
  \begin{align*}
    c_{2; Q}(2) &= \frac{1}{3!}\left(
    Q^{(2)}(-1)\binom{3}{1}\st{2}{2} + Q^{(1)}(-1)\binom{3}{2}\st{3}{2} + Q(-1)\binom{3}{3}\st{4}{2}
    \right) = -\frac{1}{6} \\
    c_{2; Q}(4) &= \frac{1}{3!}Q(-1)\binom{3}{3}\st{4}{4} = \frac{1}{6}.
  \end{align*}
  Hence, we have 
  \begin{align*}
    \calU_{2, 2}(2; q) 
    &= \calU_{4; 2}(Q; q) \\
    &= c_{2; Q}(2)(4F_2(2\tau) - F_2(\tau)) + c_{2; Q}(4)(16F_4(2\tau) - F_4(\tau)) \\
    &= -\frac{2}{3}F_2(2\tau) + \frac{1}{6}F_2(\tau) + \frac{8}{3}F_4(2\tau) - \frac{1}{6}F_4(\tau) \\
    &= -\frac{G_4(\tau)}{6} + \frac{8}{3}G_4(2\tau) + \frac{G_2(\tau)}{6} - \frac{2}{3}G_2(2\tau) - \frac{1}{32}.
  \end{align*}

   Similarly, $\calU_{2, 2}(1; q)$ corresponds to the case $N = 3,\ k = 2$, and $Q(x) = x^2$. Thus, we obtain 
   \[
     c_{3, 1; Q}(1) = (-\zeta_3^{-1})^1 A_1^{(1)}(\zeta_3^{-1})\binom{1}{0}\st{1}{1} + A_1(\zeta_3^{-1})\binom{1}{1}\st{2}{1} = \frac{\sqrt{3}i}{9},\ 
     c_{3, 1; Q}(2) = A_1(\zeta_3^{-1})\binom{1}{1}\st{2}{2} = -\frac{1}{3}
   \]
   Here, for the principal character $\chi_N^{0}$ modulo $N$,
  \[
    F_k(\chi_N^{0}; \tau) = \sum_{d \mid N}\mu(d)d^{k-1}F_k(d\tau)
  \]
  holds, where $\mu(N)$ denotes the M\"{o}bius function. Thus, letting $\chi$ be the primitive character modulo $3$, we have
  \begin{align*}
    \calU_{2, 2}(1; Q) 
    &= \calU_{2; 3}(Q; q) \\
    &= c_{3, 1; Q}(1) G(\chi) F_1(\chi; \tau) + c_{3, 1; Q}(2) (G(\chi_3^0) F_2(\chi_3^0; \tau) + 6 G(\mathbf{1}) F_2(\mathbf{1}; 3\tau)) \\
    &= -\frac{1}{3}F_1(\chi; \tau) - \frac{1}{3}(3F_2(3\tau) - F_2(\tau) + 6F_2(3\tau)) \\
    &= -3F_2(3\tau) + \frac{F_2(\tau)}{3} - \frac{F_1(\chi; \tau)}{3} \\
    &= -3G_2(3\tau) + \frac{G_2(\tau)}{3} - \frac{G_1(\chi; \tau)}{3} - \frac{1}{18}.
  \end{align*}

    In addition, $\calU_{2, 2}(0; q)$ corresponds to the case $N = 4,\ k = 2$, and $Q(x) = x^2$. Thus, we obtain
    \[
      c_{4, 1; Q}(1) = (-\zeta_4^{-1})^1 A_1^{(1)}(\zeta_4^{-1})\binom{1}{0}\st{1}{1} + A_1(\zeta_4^{-1})\binom{1}{1}\st{2}{1} = 0,\ 
      c_{4, 1; Q}(2) = A_1(\zeta_4^{-1})\binom{1}{1}\st{2}{2} = - \frac{1}{4}
    \]
    Thus, letting $\psi$ be the primitive character modulo $4$, we have
    \begin{align*}
      \calU_{2, 2}(0; q) 
      &= \calU_{2; 4}(Q; q) \\
      &= c_{4, 1; Q}(1) G(\psi) F_1(\psi; \tau) + c_{4, 1; Q}(2)(G(\chi_4^0) F_2(\chi_4^0; \tau) + 4 G(\chi_2^0) F_2(\chi_2^0; 2\tau) + 8 G(\mathbf{1}) F_2(\mathbf{1}; 4\tau)) \\
      &= -\frac{1}{4}(4(2F_2(4\tau) - F_2(2\tau)) + 8 F_2(4\tau)) \\
      &= F_2(2\tau) - 4 F_2(4\tau) \\
      &= G_2(2\tau) - 4 G_2(4\tau) - \frac{1}{8}
    \end{align*}

  Finally, $\calU_{2, 2}(-1; q)$ corresponds to the case $N = 6,\ k = 2$, and $Q(x) = x^2$. Thus, we obtain
  \[
    c_{6, 1; Q}(1) = (-\zeta_6^{-1})^1 A_1^{(1)}(\zeta_6^{-1})\binom{1}{0}\st{1}{1} + A_1(\zeta_6^{-1})\binom{1}{1}\st{2}{1} = -\frac{\sqrt{3}i}{9},\ 
    c_{6, 2; Q}(2) = A_1(\zeta_6^{-1})\binom{1}{1}\st{2}{2} = -\frac{1}{3}.
  \]
  Thus, letting $\chi'$ be the character modulo $6$ induced by $\chi$, we have 
  \begin{align*}
    \calU_{2, 2}(-1; q) 
    &= \calU_{2; 6}(Q; q) \\
    &= c_{6, 1; Q}(1)(G(\chi')F_1(\chi'; \tau) + G(\chi)F_1(\chi; 2\tau)) \\
    &+ c_{6, 1; Q}(2)(G(\chi_6^0) F_2(\chi_6^0; \tau) + 2G(\chi_3^0) F_2(\chi_3^0; 2\tau) + 6G(\chi_2^0) F_2(\chi_2^0; 3\tau) + 12G(\mathbf{1}) F_2(\mathbf{1}; 6\tau)) \\
    &= -\frac{\sqrt{3}i}{9}(\sqrt{3}i(F_1(\chi; \tau) + F_1(\chi; 2\tau)) + \sqrt{3}i F_1(\chi; 2\tau)) \\
    &+ \frac{1}{3}\left(F_2(\tau) - 2F_2(2\tau) \right. \\
    &\left. - 3F_2(3\tau) + 6F_2(6\tau)) -2(F_2(2\tau) - 3F_2(6\tau)) -6(F_2(3\tau) - 2F_2(6\tau)) + 12F_2(6\tau)\right) \\
    &= -12F_2(6\tau) + 3F_2(3\tau) + \frac{4}{3}F_2(2\tau) - \frac{1}{3}F_2(\tau) + \frac{2}{3}F_1(\chi; 2\tau) + \frac{1}{3}F_{1}(\chi; \tau) \\
    &= -12G_2(6\tau) + 3G_2(3\tau) + \frac{4}{3}G_2(2\tau) - \frac{1}{3}G_2(\tau) + \frac{2}{3}G_1(\chi; 2\tau) + \frac{1}{3}G_{1}(\chi; \tau) - \frac{1}{2}.
  \end{align*}

  \section*{Acknowledgment}
  The author would like to express his sincere gratitude to Professor Yoshinori Yamasaki for valuable advice and helpful discussions.
  
  \bibliographystyle{abbrv}
  \bibliography{references}
\end{document}